\newtheorem{lemma}{Lemma}[section]
\newtheorem{theorem}{Theorem}[section]
\newtheorem{corollary}{Corollary}[section]
\newtheorem{definition}{Definition}[section]
\theoremstyle{definition}
\def\section{\@startsection{section}{1}%
\z@{1\linespacing\@plus\linespacing}{1\linespacing}%
{\bf\centering}}
\def\subsection{\@startsection{subsection}{0}%
\z@{\linespacing\@plus\linespacing}{\linespacing}%
{\bf}}
\DeclareMathOperator{\diam}{diam}
\newcommand{\cB}{\mathcal{B}}
\begin{document}
\title[Estimates of densities for the reflected Brownian motion on nested fractals]
{Estimates of the transition densities for the reflected Brownian motion on simple nested fractals}
\author{Mariusz Olszewski}

\address{M. Olszewski \\ Faculty of Pure and Applied Mathematics, Wroc{\l}aw University of Science and Technology, Wyb. Wyspia\'nskiego 27, 50-370 Wroc{\l}aw, Poland}
\email{mariusz.olszewski@pwr.edu.pl}

\begin{abstract}
{We give sharp two-sided estimates for the functions $g_M(t,x,y)$ and $g_M(t,x,y)-g(t,x,y)$, where $g_M(t,x,y)$ are the transition probability densities of the reflected Brownian motion on a $M$-complex of size $M \in \mathbb{Z}$ of an unbounded planar simple nested fractal and $g(t,x,y)$ are the transition probability densities of the `free' Brownian motion on this fractal. This is done for a large class of planar simple nested fractals with the good labeling property.
}

\bigskip
\noindent
\emph{Key-words}: projection, good labelling property, reflected process, transition probability density, simple nested fractal, graph metric, Sierpi\'nski gasket

\bigskip
\noindent
2010 {\it MS Classification}: {Primary: 60J35, 28A80; Secondary: 60J25, 60J65} 
\end{abstract}

\footnotetext{M. Olszewski was supported in part by the National Science Center, Poland, grant no. 2015/17/B/ST1/01233}

\maketitle

\baselineskip 0.5 cm

\bigskip\bigskip

\section{Introduction}

The analysis and probability theory (especially stochastic processes) on fractals underwent rapid development over the last decades
(see e.g. \cite{bib:Bar, bib:Ki, bib:St1, bib:St2} and  the  references therein). The original motivation came from the investigations on the properties of disordered media in mathematical physics. Fractals also help us to understand the features of natural phenomena such as polymers, and growth of molds and crystals. The rigorous definition of the Brownian motion on the Sierpi\'nski gasket has been given by Barlow and Perkins in \cite{bib:BP} (see also \cite{bib:G, bib:Kus2}). Lindstr\o m \cite{bib:Lin} used a nonstandard analysis to construct such a process on general simple nested fractals (see also \cite{bib:F, bib:Kus, bib:Sh} for a Dirichlet form approach). The case of more general fractals was also addressed in \cite{bib:BBa, bib:Kum1, bib:KumS}. The estimates of the transition densities for the Brownian motion on simple nested fractals were proven by Kumagai in \cite{bib:Kum}. The case of more general class of finitely ramified fractals (called affine nested fractals) was studied in \cite{bib:FHK}. 

The present article is a companion paper to \cite{bib:KOPP}, where the \emph{reflected Brownian motion} in a $M$-complex, $M \in \mathbb{Z}$,  of a simple nested fractal was constructed (see also the previous paper \cite{bib:KPP-PTRF} for the case of the Sierpi\'nski triangle). Such a process was obtained as a 'folding' projection of the free Brownian motion from the unbounded fractal and the whole construction was performed under the key geometric assumption that the fractal has a \emph{good labelling property} (see Section 2 for more details). It was also proven in the cited paper that the one dimensional distributions of this process possess the continuous and symmetric densities $g_M(t,x,y)$ (see \eqref{eq:refldens} for a definition), which provided us with further regularity properties of the reflected process.

Our main goal in this paper is to find the sharp estimates of the densities $g_M(t,x,y)$ and for differences $g_M(t,x,y)-g(t,x,y)$, where $g(t,x,y)$ are the transition probability densities of a `free' process. We give an argument which allows us to deduce the two-sided sharp estimates for these functions from the intrinsic growth property of the graph metric on the planar nested fractal (for the definition of the graph metric $d_M(x,y)$ see \eqref{def:graphmetric}). More precisely, we show that there exist the positive constants $c_1,...,c_6$ (uniform in $t$, $x$, $y$ and $M$) such that 
\begin{align*}
c_1 \left( f_{c_2}(t,|x-y|) \vee h_{c_3}(t,M)\right) &\leq g_M(t,x,y)\\
& \leq c_4 \left( f_{c_5}(t,|x-y|) \vee h_{c_6}(t,M)\right),
\end{align*} 
where 
\begin{align*}
f_c (t,r) = & \ t^{-d_f/d_w} \exp \left(-c \left(\frac{r^{d_w}}{t} \right)^{\frac{1}{d_J -1}} \right)
\end{align*}
and
\begin{align*}
h_c (t,M) = & \ L^{-d_{f}M}  \left( \frac{L^M}{t^{1/d_w}} \vee 1\right)^{d_f - \frac{d_w}{d_J -1}} \exp\left(-c \left( \frac{L^M}{t^{1/d_w}} \vee 1\right)^{\frac{d_w}{d_J -1}}\right).
\end{align*}
This result is given in Theorem \ref{th:main}. Here $L$ is a scaling factor of the fractal and $d_f$, $d_w$ and $d_J$ are certain parameters determined by the geometry of the fractal. One can see from the above estimates that for $t \geq L^{M d_w}$ the density $g_M(t,x,y)$ behaves like $L^{-Md_f}$, which shows that in large times the reflected process is distributed almost uniformly on a given $M$-complex. When $t < L^{M d_w}$, then the reflected process less 'feels' the reflection and resembles the free diffusion (for details see Corollary \ref{cor:uniform}). This effect is explained by our second main result (Theorem \ref{th:main2}), which gives the sharp two-sided estimates for the difference $g_M(t,x,y)-g(t,x,y)$. Indeed, we prove that there are positive constants $c_7,...,c_{12}$ (again uniform in $t$, $x$, $y$ and $M$) such that 
\begin{align*}
c_{7}  \big(f_{c_{8}}(t,\delta_M(x,y)) \vee h_{c_{9}}(t,M)\big)
& \leq g_M(t,x,y)-g(t,x,y) \\
& \leq c_{10} \big(f_{c_{11}}(t,\delta_M(x,y)) \vee h_{c_{12}}(t,M)\big),
\end{align*}
where $\delta_M(x,y) = \inf_{z \in V^{*}_{M}} (|x-z|+|z-y|)$ and $V^{*}_M$ is the set of all vertices of a basic $M$-complex that connect it with other $M$-complexes. In light of Corollary \ref{cor:uniform} mentioned above, this result can be understood as the second term asymptotic estimate of the density $g_M(t,x,y)$ for $t < L^{M d_w}$. It emerges that the dependence on the boundary of a given $M$-complex occurs only in the second term of this expansion.

We would like to emphasize that we have direct applications for the estimates obtained in the present paper. In recent articles \cite{bib:KaPP2, bib:KaPP}, the reflected Brownian motion was used to prove the existence and further asymptotic properties of the integrated density of states for subordinate Brownian motions evolving in presence of the Poissonian random field on the Sierpi\'nski triangle. The estimates of the densities were an essential tool there. Our present results will allow us to continue this fascinating research in the case when the configuration space is modeled by a general simple nested fractal (in this context, it is crucial that our estimates describes the behaviour of $g_M(t,x,y)$ not only in $x, y$ and $t$, but also in $M$). This is the subject of an ongoing project. 

At the end of the Introduction, let us say a few words about our methods. First note that our upper bound for the tail of the series in Lemma \ref{lem:tail} extends a similar result in \cite[Lem. 2.5]{bib:KaPP2} obtained for the reflected Brownian motion on the Sierpi\'nski gasket. 
The proof of that bound in an essential way uses the facts that the $M$-complexes of any size $M \in \mathbb{Z}$ of the gasket agree with the Euclidean balls $B(0,2^M)$ intersected with the fractal and that the geodesic (or the shortest path) metric is uniformly comparable to the Euclidean one. Such a comparabilty condition is also a common assumption in the papers dealing with subordinate Brownian motions on fractals having the $d$-set structure \cite{bib:BSS, bib:KKw}. This argument does not have an extension to the general nested fractals, for which the geodesic metric is typically not well defined. To overcome this difficulty, we propose a new approach based on an application of the graph metric of order $M$ and works well for all nested fractals. Our main contribution is the observation that the intrinsic growth property of the graph metric stated in Lemma \ref{lem:metrics} leads to the sharp estimates of densities $g_M(t,x,y)$. We also want to mention that the concluding part of the proof of the upper bound in Lemma \ref{lem:tail} follows the general ideas from the proof of \cite[Lem. 2.5]{bib:KaPP2}, while the basic estimate in Lemma \ref{lem:adjacent} is a completely new observation.

\section{Preliminaries}

\subsection{Planar simple nested fractals}

Consider a collection of similitudes $\Psi_i : \mathbb{R}^2 \to \mathbb{R}^2$ with a common scaling factor $L>1$  and a common isometry part $U,$ i.e. $\Psi_i(x) = (1/L) U(x) + \nu_i,$  where  $\nu_i \in \mathbb{R}^2$, $i \in \{1, ..., N\}.$ We shall assume $\nu_1 = 0$.
There exists a unique nonempty compact set $\mathcal{K}^{\left\langle 0\right\rangle}$ (called {\em the  fractal generated by the system} $(\Psi_i)_{i=1}^N$) such that $\mathcal{K}^{\left\langle 0\right\rangle} = \bigcup_{i=1}^{N} \Psi_i\left(\mathcal{K}^{\left\langle 0\right\rangle}\right)$.  As $L>1$, each similitude has exactly one fixed point and there are exactly $N$ fixed points of the transformations $\Psi_1, ..., \Psi_N$.

\begin{definition}[\textbf{Essential fixed point}]
A fixed point $x \in \mathcal{K}^{\left\langle 0\right\rangle}$ is an essential fixed point if there exists another fixed point $y \in \mathcal{K}^{\left\langle 0\right\rangle}$ and two different similitudes $\Psi_i$, $\Psi_j$ such that $\Psi_i(x)=\Psi_j(y)$.
\end{definition}

The set of all essential fixed points of transformations $\Psi_1, ..., \Psi_N$ is denoted by $V_{0}^{\left\langle 0\right\rangle}$. Clearly, $k:=\# V^{\left\langle 0\right\rangle}_{0} \leq N$. For the Sierpi\'nski gasket $k = N$, but there are many examples with $k <N$ (see Fig. 1).

\begin{definition}[\textbf{Simple nested fractal}]
\label{def:snf}
 The fractal $\mathcal{K}^{\left\langle 0 \right\rangle}$ generated by the system $(\Psi_i)_{i=1}^N$ is called a \emph{simple nested fractal (SNF)} if the following five conditions are met: \\
(1) $\# V_{0}^{\left\langle 0\right\rangle} \geq 2.$ \\
(2) \emph{(Open Set Condition)} There exists an open set $U \subset \mathbb{R}^2$ such that for $i\neq j$ one has $\Psi_i (U) \cap \Psi_j (U)= \emptyset$ and $\bigcup_{i=1}^N \Psi_i (U) \subseteq U$. \\
(3) \emph{(Nesting)} $\Psi_i\left(\mathcal{K}^{\left\langle 0 \right\rangle}\right) \cap \Psi_j \left(\mathcal{K}^{\left\langle 0 \right\rangle}\right) = \Psi_i \left(V_{0}^{\left\langle 0\right\rangle}\right) \cap \Psi_j \left(V_{0}^{\left\langle 0\right\rangle}\right)$ for $i \neq j$. \\
(4) \emph{(Symmetry)} For $x,y \in V_{0}^{\left\langle 0\right\rangle},$ let $S_{x,y}$ denote the symmetry with respect to the line bisecting the segment $\left[x,y\right]$. Then
$$
\forall i \in \{1,...,M\} \ \forall x,y \in V_{0}^{\left\langle 0\right\rangle} \ \exists j \in \{1,...,M\} \ S_{x,y} \left( \Psi_i \left(V_{0}^{\left\langle 0\right\rangle} \right) \right) = \Psi_j \left(V_{0}^{\left\langle 0\right\rangle} \right).
$$
(5) \emph{(Connectivity)} On the set $V_{-1}^{\left\langle 0\right\rangle}:= \bigcup_i \Psi_i \left(V_{0}^{\left\langle 0\right\rangle}\right)$ we define graph structure $E_{-1}$ as follows: $(x,y) \in E_{-1}$ if and only if $x, y \in \Psi_i\left(\mathcal{K}^{\left\langle 0 \right\rangle}\right)$ for some $i$.\\
Then the graph $(V_{-1}^{\left\langle 0\right\rangle},E_{-1} )$ is required to be connected.
\end{definition}

\bigskip

If $\mathcal{K}^{\left\langle 0 \right\rangle}$ is a simple nested fractal, then we denote
\begin{align} \label{eq:Kn}
\mathcal{K}^{\left\langle M\right\rangle} = L^M \mathcal{K}^{\left\langle 0\right\rangle}, \quad M \in \mathbb{Z},
\end{align}
and
\begin{align} \label{eq:Kinfty}
\mathcal{K}^{\left\langle \infty \right\rangle} = \bigcup_{M=0}^{\infty} \mathcal{K}^{\left\langle M\right\rangle}.
\end{align}
The set $\mathcal{K}^{\left\langle \infty \right\rangle}$ is the \textbf{unbounded simple nested fractal (USNF)}.

\begin{definition} \label{def:obj} Let $M\in\mathbb Z.$ \\
(1) \emph{$M$-complex}: \label{def:Mcomplex}
every set $\Delta_M \subset \mathcal{K}^{\left\langle \infty \right\rangle}$ of the form
\begin{equation} \label{eq:Mcompl}
\Delta_M  = \mathcal{K}^{\left\langle M \right\rangle} + \sum_{j=M+1}^{J} L^{j} \nu_{i_j}
\end{equation}
for some $J \geq M+1$, $\nu_{i_j} \in \left\{\nu_1,...,\nu_N\right\}$, is called an \emph{$M$-complex}. The set of all $M$-complexes in $\mathcal{K}^{\left\langle \infty \right\rangle}$ is denoted by $\mathcal{T}_M$. \\
(2) \emph{Vertices of an $M$-complex}: the set $V\left(\Delta_M\right) = L^{M} V^{\left\langle 0 \right\rangle}_0 + \sum_{j=M+1}^{J} L^{j} \nu_{i_j}$. \\
(3) \emph{Vertices of $\mathcal{K}^{\left\langle M \right\rangle}$}:
$$
V^{\left\langle M\right\rangle}_{M} = V\left(\mathcal{K}^{\left\langle M \right\rangle}\right) = L^M V^{\left\langle 0\right\rangle}_{0}.
$$
(4) \emph{Vertices of all $M$-complexes} inside a $(M+m)$-complex for $m>0$:
$$
V_M^{\langle M+m\rangle}= \bigcup_{i=1}^{N} V_M^{\langle M+m-1\rangle} + L^M \nu_i.
$$ \\
(5) \emph{Vertices of all 0-complexes} inside the unbounded nested fractal:
$$
V^{\left\langle \infty \right\rangle}_{0} = \bigcup_{M=0}^{\infty} V^{\left\langle M\right\rangle}_{0}.
$$ 
(6) \emph{Vertices of $M$-complexes} from the unbounded fractal:
$$
V^{\left\langle \infty \right\rangle}_{M} = L^{M} V^{\left\langle \infty \right\rangle}_{0}.
$$ 
(7) \emph{The unique $M$-complex containing} $x \in \mathcal{K}^{\left\langle \infty \right\rangle}\backslash V^{\left\langle \infty \right\rangle}_{M}$ is denoted by $\Delta_M (x)$.
\end{definition}

\begin{figure}[ht]
\centering
	\includegraphics[scale=0.04]{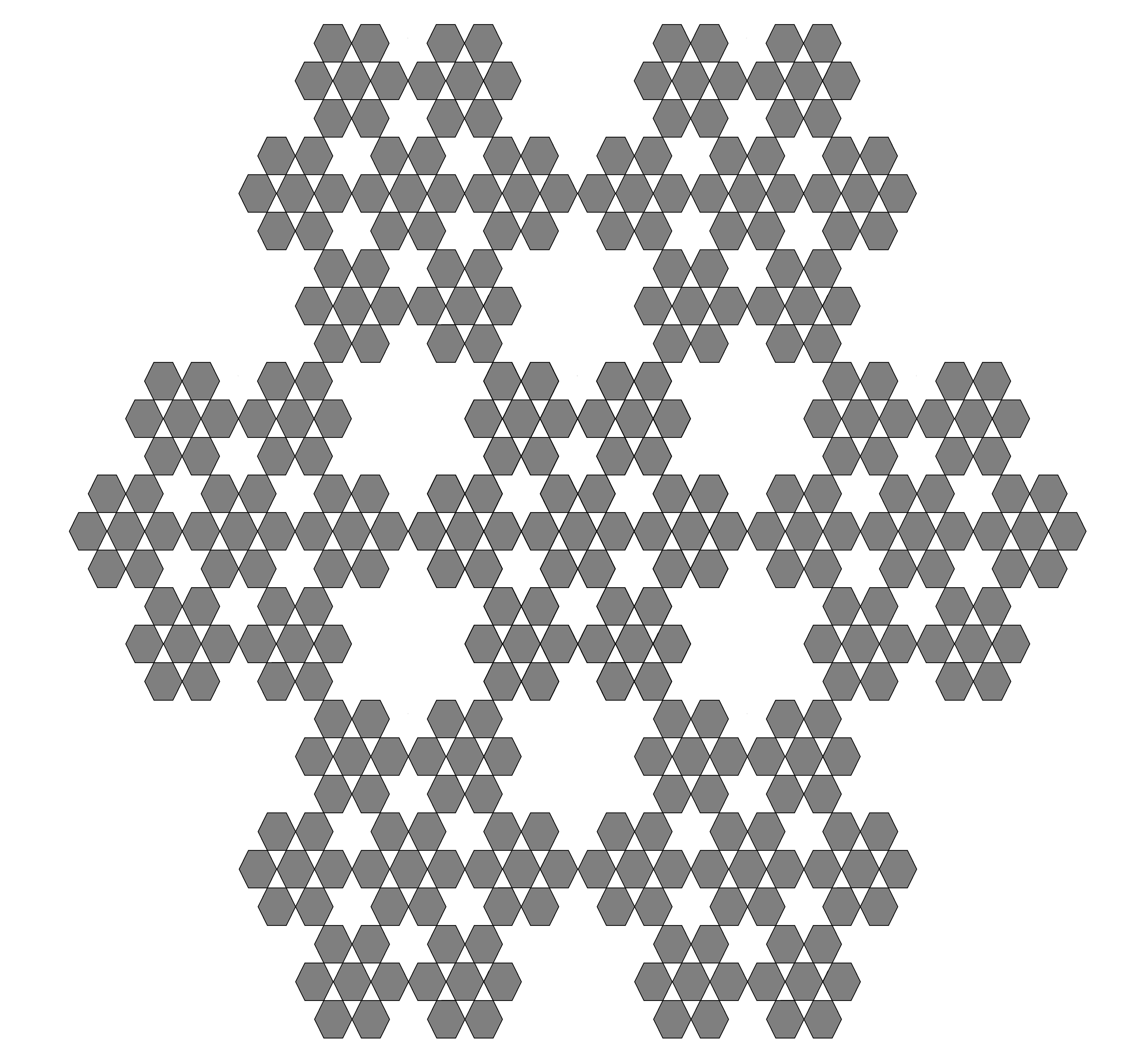}
\caption{An example of a nested fractal: the Lindstr\o m snowflake. It is constructed by 7 similitudes with $L=3$. It has 7 fixed points, but only 6 essential fixed points.}
\end{figure}

By $d_f$, $d_w$ and $d_s$ we denote the Hausdorff dimension, the walk dimension and the spectral dimension of SNF $\mathcal{K}^{\left\langle 0 \right\rangle}$, respectively. It is known that the identity $d_f/d_w = d_s/2$ holds.

The \emph{$M$-graph metric} on  $\mathcal{K}^{\left\langle \infty \right\rangle} \times \mathcal{K}^{\left\langle \infty \right\rangle}$  is defined as follows:
\begin{equation}
\label{def:graphmetric}
d_M (x,y):= \left\{ \begin{array}{ll}
0, & \textrm{if } x=y ;\\
1, & \textrm{if there exists } \Delta_M \in \mathcal{T}_M \textrm{ such that } x,y \in \Delta_M ;\\
n>1, & \textrm{if there does not exist } \Delta_M \in \mathcal{T}_M \textrm{ such that } x,y \in \Delta_M \\
&  \textrm{ and } n \textrm{ is the lowest number for which exist} \\
& \Delta_M^{(1)}, \Delta_M^{(2)}, ..., \Delta_M^{(n)} \in \mathcal{T}_M \textrm{ such that } x \in \Delta_M^{(1)},\\
&  y \in \Delta_M^{(n)} \textrm{ and } \Delta_M^{(i)} \cap \Delta_M^{(i+1)} \neq \emptyset \textrm{ for  }1 \leq i \leq n-1.
\end{array} \right.
\end{equation}

\bigskip

\subsection{Good labelling property and folding projections}

Throughout this section we assume that $M \in \mathbb{Z}$ is arbitrary but fixed. Note that
every $M$-complex $\Delta_M$ is a regular polygon with $k$ vertices \cite[Prop. 2.1]{bib:KOPP}. 
In consequence, there exist exactly $k$ different rotations $R_i$ around the barycenter of $\mathcal{K}^{\left\langle M \right\rangle}$,
mapping $\mathcal{K}^{\left\langle M \right\rangle}$ onto $\mathcal{K}^{\left\langle M \right\rangle}$ (for $i=1,2,...,k$ the rotation $R_i$ rotates by angle $(2\pi i)/k$). Denote $\mathcal{R}_M = \{R_1, ..., R_k\}$.

The concept of the \emph{good labelling property} (GLP in short) has been introduced in \cite{bib:KOPP}. Given the set of labels $\mathcal{A} = \{a_1, ..., a_k\}$, the labelling function is a map $\ell_M : V_{M}^{\left\langle \infty \right\rangle} \to \mathcal{A}$. It provides the \emph{good labelling} (of order $M$) if every $M$-complex has the complete set of labels mapped to its vertices and the vertices of any $M$-complex are labelled in the same orientation. More precisely:

\begin{itemize}
\item[(1)] For every $M$-complex $\Delta_M$ the restriction of $\ell_M$ to $V\left(\Delta_M\right)$ is a bijection onto $\mathcal{A}$.
\item[(2)] For every $M$-complex $\Delta_M$ of the form
$$
\Delta_M  = \mathcal{K}^{\left\langle M \right\rangle} + \sum_{j=M+1}^{J} L^{j} \nu_{i_j},
$$
with some $J \geq M+1$ and $\nu_{i_j} \in \left\{\nu_1,...,\nu_N\right\}$ (cf. Def. \ref{def:obj} (1)), there exists a rotation $R_{\Delta_M} \in \mathcal{R}_M$ such that
\begin{align} \label{eq:rotation}
\ell_M(v)=\ell_M\left(R_{\Delta_M}\left(v -\sum_{j=M+1}^{J} L^{j} \nu_{i_j}\right)\right) , \quad v \in V\left(\Delta_M\right).
\end{align}
\end{itemize} 

\medskip
\noindent
The fractal $\mathcal{K}^{\left\langle \infty \right\rangle}$ is said to have the GLP if for some $M \in \mathbb{Z}$ there exist a labelling function $\ell_M$ satisfying both conditions above. Note that due to the self-similarity of this set, having this property for some $M$ gives the same for every $M \in \mathbb{Z}$. The GLP takes a quite simple form in the case of Sierpi\'nski triangle (cf. \cite{bib:KPP-PTRF, bib:KaPP}). However, in general, it is a rather delicate property (see \cite[Rem. 3.1]{bib:KOPP}). 

Now, for the unbounded fractal $\mathcal{K}^{\left\langle \infty \right\rangle}$ having GLP, we define a projection map 
$$
\mathcal{K}^{\left\langle \infty \right\rangle} \ni x \longmapsto \pi_{M}(x) \in \mathcal{K}^{\left\langle M \right\rangle}
$$
by the formula
\begin{equation}
\pi_M(x) = R_{\Delta_M}\left(x -\sum_{j=M+1}^{J} L^{j} \nu_{i_j}\right),
\end{equation}
where $\Delta_M = \mathcal{K}^{\left\langle M \right\rangle} + \sum_{j=M+1}^{J} L^{j} \nu_{i_j}$ is an $M$-complex containing $x$ and $R_{\Delta_M}$ is the unique rotation determined by \eqref{eq:rotation}. Here, the two cases are possible: \\
(1) if $x \notin V_M^{\langle \infty\rangle}$, then $\Delta_M = \Delta_M(x)$ (i.e. $\Delta_M$ can be chosen uniquely); \\
(2) if $x \in V_M^{\langle \infty\rangle}$, then $\Delta_M$ is a one of the $M$-complexes $\Delta^{(i)}_M$ such that 
$\left\{x\right\}= \bigcap_{i=1}^{r_x} \Delta^{(i)}_M$, where $r_x= \textrm{rank}(x)$ is the number of $M$-complexes meeting at $x$.

If $x$ is a vertex from $V_M^{\langle \infty\rangle},$  possibly belonging to more than one $M$-complex, then indeed we can choose any of those complexes in the definition above -- thanks to the GLP of $\mathcal{K}^{\left\langle \infty \right\rangle}$, the image does not depend on the particular choice of $\Delta^{(i)}_M$.

The projection $\pi_M$ is an essential tool to construct the reflected Brownian motion on $\mathcal{K}^{\left\langle M \right\rangle}$.

\subsection{Reflected Brownian motion on simple nested fractals}

We denote by $Z=(Z_t, \mathbf{P}^{x})_{t \geq 0, \, x \in \mathcal{K}^{\left\langle \infty \right\rangle}}$ the \emph{Brownian motion} on the USNF $\mathcal{K}^{\left\langle \infty \right\rangle}$. In the case of Sierpi\'nski gasket such a process has been rigorously constructed in \cite{bib:BP}. For general nested fractals, the Brownian motion has been first constructed on the unit fractal $\mathcal{K}^{\left\langle 0 \right\rangle}$ (\cite{bib:Lin}, see also \cite{bib:Kus}) and then extended to $\mathcal{K}^{\left\langle \infty \right\rangle}$ by means of Dirichlet forms (\cite{bib:F}, see also \cite{bib:KumKus, bib:Sh}). It is a strong Markov process with continuous paths, which distributions are invariant under local isometries of $\mathcal{K}^{\left\langle \infty \right\rangle}$. It has transition probability densities $g(t,x,y)$ with respect to the $d_f$-dimensional Hausdorff measure $\mu$ on $\mathcal{K}^{\left\langle \infty \right\rangle}$, i.e.,
$$
\mathbf{P}^{x}(Z_t \in A) = \int_A g(t,x,y) \mu(dy), \quad t > 0, \ \ x \in \mathcal{K}^{\left\langle \infty \right\rangle}, \ \ A \subset \cB(\mathcal{K}^{\left\langle \infty \right\rangle}),
$$
which are jointly continuous on $(0,\infty) \times \mathcal{K}^{\left\langle \infty \right\rangle} \times \mathcal{K}^{\left\langle \infty \right\rangle}$ and have the scaling property
$$
g(t,x,y) = L^{d_f} g(L^{d_w} t, L x, L y), \quad t>0, \quad x, y \in \mathcal{K}^{\left\langle \infty \right\rangle}.
$$
Moreover, there are absolute constants $c_{13},...,c_{16} >0$ such that the following subgaussian estimates \cite[Theorems 5.2, 5.5]{bib:Kum}
\begin{multline}
\label{eq:kum}
c_{13} t^{-d_s/2} \exp \left(-c_{14} \left(\frac{\left|x-y \right|^{d_w}}{t} \right)^{1 /\left(d_J -1\right)} \right) \leq g(t,x,y) \\
\leq c_{15} t^{-d_s/2} \exp \left(-c_{16} \left(\frac{\left|x-y \right|^{d_w}}{t} \right)^{1 /\left(d_J -1\right)} \right), \quad t>0, \ \ x,y \in \mathcal{K}^{\left\langle \infty \right\rangle}
\end{multline}
holds. The constant $d_J > 1$, called the \emph{chemical exponent} of $\mathcal{K}^{\left\langle \infty \right\rangle}$, is a parameter describing the shortest path scaling of the set $\mathcal{K}^{\left\langle \infty \right\rangle}$. Typically, $d_J \neq d_w$, but it is known that for the Sierpi\'nski gasket one has $d_J=d_w$. The above estimates were proven under the assumption that there exists $n \in \mathbb{N}$ such that for any $M\in\mathbb Z,$  if $x,y \in \mathcal{K}^{\left\langle \infty \right\rangle}$ satisfy $\left|x-y\right| \leq L ^{M}$, then $d_M(x,y) \leq n$ (\cite[Sec. 5]{bib:Kum}). It was shown in \cite{bib:KOPP} that in fact this assumption holds true for any planar simple nested fractal. For a fair account of the theory of Brownian motion on simple nested fractals we refer to \cite{bib:Bar} and
references therein.

The \emph{reflected Brownian motion} on $\mathcal{K}^{\left\langle M\right\rangle}$ was constructed in \cite{bib:KOPP} as a canonical projection of the free Brownian motion 
$$
Z_t^M = \pi_M(Z_t).
$$
Formally, this is the process $Z^M=(Z_t^M, \mathbf{P}^{x}_{M})_{t \geq 0,\, x \in \mathcal{K}^{\left\langle M\right\rangle}}$, where the measures $\mathbf{P}^{x}_{M}$ are determined by
\begin{align} \label{eq:fdd_reflected}
\mathbf{P}^{x}_{M}(Z^M_{t_1} \in A_1, ... , Z^M_{t_n} \in A_n)= \mathbf{P}^{x}(Z_{t_1} \in \pi_M^{-1}(A_1), ... , Z_{t_n} \in \pi_M^{-1}(A_n)),
\end{align}
for every $0 \leq t_1 < t_2 <...< t_n$, $x \in \mathcal{K}^{\left\langle M\right\rangle}$ and $A_1,...,A_n \in \cB(\mathcal{K}^{\left\langle M\right\rangle})$. As mentioned above, its transition probabilities are absolutely continuous with respect to the measure $\mu$ (restricted to $\mathcal{K}^{\left\langle M\right\rangle}$) with densities $g_M(t,x,y)$ given by
\begin{equation}
\label{eq:refldens}
g_{M}\left(t,x,y\right)= \left\{ \begin{array}{ll}
\displaystyle\sum_{y'\in \pi_{M}^{-1} (y)}{ g(t,x,y')} & \textrm{if } y \in \mathcal{K}^{\left\langle M\right\rangle} \backslash V_{M}^{\left\langle M\right\rangle}, \\
\displaystyle\sum_{y'\in \pi_{M}^{-1} (y)}{g(t,x,y')} \cdot \textrm{rank}(y') & \textrm{if } y \in V_{M}^{\left\langle M\right\rangle}, \\
\end{array}\right.
\end{equation}
where $\textrm{rank}(y')$ is the number of $M$-complexes meeting at the point $y'$. Moreover, it was proven in \cite[Th. 4.1 and Th.4.2]{bib:KOPP} that the function $g_M(t,x,y)$ is continuous in $(t,x,y)$ and symmetric and bounded in $(x,y)$ for every fixed $t>0$. This provides us with further regularity properties of the process $\left(Z_t^M\right)_{t \geq 0}$ such as Feller and strong Feller property. 

Our aim in the present paper is to find the sharp two-sided estimates for the densities $g_M(t,x,y)$ and for $g_M(t,x,y) - g(t,x,y)$. This goal will be achieved in the next section. 

\bigskip

\section{Estimates}

We are now in a position to state our main results in this paper. For given $c>0$ and for every $t>0$, $M \in \mathbb{Z}$ and $r\geq 0$ we denote
\begin{align}
f_{c}(t,r) = t^{-d_s/2} \exp \left(-c \left(\frac{r^{d_w}}{t} \right)^{\frac{1}{d_J-1}} \right)
\end{align}
and
\begin{align} \label{eq:def_h}
h_{c}(t,M) =L^{-d_f M} \left( \frac{L^{M}}{t^{1/d_w}} \vee 1\right)^{d_f-{\frac{d_w}{d_J-1}}} \exp\left(-c \left(  \frac{L^{M}}{t^{1/d_w}} \vee 1\right)^{\frac{d_w}{d_J-1}}\right).
\end{align}

\noindent
The first theorem gives the sharp two-sided estimates for $g_M(t,x,y)$.

\begin{theorem} \label{th:main}
Let $\mathcal{K}^{\left\langle \infty \right\rangle}$ be the USNF with the GLP. 
Then there exist constants $c_1, ... ,c_6 >0$ such that for every $t>0$, $M \in \mathbb{Z}$ and $x,y \in \mathcal{K}^{\left\langle M \right\rangle}$ one has
\begin{align*}
c_1 \left(f_{c_2}(t,|x-y|) \vee h_{c_3}(t,M) \right) & \leq g_M(t,x,y) \\ & \leq c_4 \left(f_{c_5}(t,|x-y|) \vee h_{c_6}(t,M)\right).
\end{align*}
\end{theorem}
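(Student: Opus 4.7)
The plan is to start from the explicit representation \eqref{eq:refldens} and insert the subgaussian bounds \eqref{eq:kum}. Since the multiplicity $\textrm{rank}(y')$ is uniformly bounded by a geometric constant of the fractal, it may be absorbed into multiplicative constants and the task reduces to matching two-sided estimates for
\begin{align*}
S(t,x,y) \, = \, t^{-d_s/2} \sum_{y' \in \pi_M^{-1}(y)} \exp\!\left( -c\left( \frac{|x-y'|^{d_w}}{t} \right)^{1/(d_J-1)} \right).
\end{align*}
By construction of the folding projection under the GLP, the preimages $\pi_M^{-1}(y)$ are naturally in bijection with the $M$-complexes of $\mathcal{K}^{\langle \infty \rangle}$: every $\Delta \in \mathcal{T}_M$ carries exactly one preimage $y_\Delta \in \Delta$, with $y_{\mathcal{K}^{\langle M\rangle}} = y$ itself. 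I will split $S$ into the local term at $y$ and a tail indexed by the remaining complexes, organised by their $M$-graph distance from $\Delta_M(x)$.

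\emph{Lower bound.} The local term gives $g_M(t,x,y) \geq g(t,x,y) \geq c\, f_{c'}(t,|x-y|)$ immediately from \eqref{eq:kum}, which produces the $f_c$ half of the lower bound. To obtain $g_M(t,x,y) \geq c\, h_{c'}(t,M)$ I consider two regimes. If $t \geq L^{M d_w}$, the Euclidean ball $B(x, C t^{1/d_w})$ contains at least $c(t^{1/d_w}/L^M)^{d_f}$ distinct $M$-complexes whose preimages each contribute a Gaussian factor $\asymp 1$, so the tail is at least of order $t^{-d_s/2}(t^{1/d_w}/L^M)^{d_f} = c\, L^{-M d_f}$, which matches $h_{c'}(t,M)$ in this regime. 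If $t < L^{M d_w}$, I instead select preimages in $M$-complexes at $M$-graph distance of order $n^\ast \asymp \max(1, t^{1/d_w}/L^M)$ from $\mathcal{K}^{\langle M \rangle}$ and invoke Lemma \ref{lem:adjacent} to obtain the lower bound of order $h_{c'}(t,M)$.

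\emph{Upper bound.} The local term is bounded by $C f_{c_{16}}(t,|x-y|)$ using $|x-y| \leq |x-y'|$ for every $y' \in \pi_M^{-1}(y)$ lying in $\mathcal{K}^{\langle M\rangle}$. For the tail, I appeal to Lemma \ref{lem:metrics}, which provides a Euclidean lower bound of the form $|x-y_\Delta| \geq c\, L^M\, \psi(d_M(\Delta_M(x),\Delta))$ together with a polynomial upper bound on the cardinality of $\{\Delta : d_M(\Delta_M(x),\Delta) = n\}$. Lemma \ref{lem:tail} then evaluates the resulting series: a standard Laplace-type analysis shows that the dominant index is $n^\ast \asymp \max(1, t^{1/d_w}/L^M)$, and the sum reproduces exactly the polynomial prefactor $L^{-d_f M}(L^M/t^{1/d_w} \vee 1)^{d_f - d_w/(d_J-1)}$ and the exponential factor $\exp(-c(L^M/t^{1/d_w} \vee 1)^{d_w/(d_J-1)})$ of \eqref{eq:def_h}.

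\emph{Main obstacle.} The crux is obtaining the correct polynomial prefactor in $h_c$, which demands tight two-sided information on the geometry of the preimage configuration, simultaneously in the Euclidean and in the $M$-graph metric. On the Sierpi\'nski gasket, this step is trivialised by the comparability of the Euclidean and geodesic distances; on general simple nested fractals the geodesic metric is typically not even well defined, and the argument must be replaced by the intrinsic growth property of $d_M$ (Lemma \ref{lem:metrics}), combined with the tail estimate (Lemma \ref{lem:tail}) and the near-boundary basic estimate (Lemma \ref{lem:adjacent}). Once these three ingredients are in place, assembling the local and tail bounds completes the proof of Theorem \ref{th:main}.
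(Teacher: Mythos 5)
Your overall architecture (local term plus reflected preimages, one per $M$-complex, controlled by Lemmas \ref{lem:metrics}, \ref{lem:tail} and \ref{lem:adjacent}) matches the paper's, and the local term and the far-away tail are handled correctly in spirit. But there is a genuine gap in your upper bound: you fold \emph{all} non-local preimages into a single tail "organised by $M$-graph distance" and estimated via Lemma \ref{lem:metrics} and Lemma \ref{lem:tail}. This fails for the preimages $y'$ lying in the $M$-complexes adjacent to $\mathcal{K}^{\left\langle M\right\rangle}$, i.e.\ those $\Delta_M$ with $\Delta_M\cap\mathcal{K}^{\left\langle M\right\rangle}=\{z\}$ (the set $C(M,0,y)$ of the paper). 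For such $y'$ one only has $d_M(x,y')\leq 2$, so the upper half of \eqref{eq:main_ass} degenerates to the trivial bound $\max\{2,\cdot\}$ and yields no lower bound on $|x-y'|$; indeed $|x-y'|$ can be arbitrarily small when $x$ and $y$ both approach the connecting vertex $z$, in which case $g(t,x,y')\asymp t^{-d_s/2}$, which is \emph{not} dominated by $h_c(t,M)$ when $t\ll L^{Md_w}$. This is exactly why the paper decomposes $g_M=g^{(1)}_M+g^{(2)}_M+g$ in \eqref{eq:decomposition}: Lemma \ref{lem:tail} covers only $g^{(1)}_M$ (the complexes separated from $\mathcal{K}^{\left\langle M\right\rangle}$, for which $d_{M+m-1}(x,y')>2$ and the graph-metric argument bites), while $g^{(2)}_M$ must be compared to $f_c(t,|x-y|)$ rather than to $h_c(t,M)$, using the lower bound $|x-y'|\geq c_{20}|x-y|$ of Lemma \ref{lem:adjacent}. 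You cite Lemma \ref{lem:adjacent} as an ingredient in your closing paragraph, but the upper-bound argument as written never actually applies it where it is needed.

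Two smaller points. In your lower bound for $t<L^{Md_w}$ you invoke Lemma \ref{lem:adjacent}, but that lemma bounds $|x-y'|$ from \emph{below}, which is the wrong direction for a heat-kernel lower bound; what you need there is an upper bound $|x-y'|\leq CL^{M}$, which follows from the diameter of $\mathcal{K}^{\left\langle M+1\right\rangle}$ or from the lower half of \eqref{eq:main_ass}. The paper in fact avoids any regime-splitting here: the lower half of Lemma \ref{lem:tail}, obtained by summing over the annuli $A(M,m,y)$, $m\geq 1$ (each of cardinality $N^m(N-1)$, each point at distance $\leq c_{17}^{-1}L^{M+m+1}$ from $x$) and comparing with an integral, already produces $h_{c}(t,M)$ uniformly in $t$; your Euclidean volume-counting in the regime $t\geq L^{Md_w}$ is a legitimate alternative but relies on an Ahlfors-regular count of $M$-complexes in balls that none of the cited lemmas supplies. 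Finally, the bijection between preimages and $M$-complexes, and hence the decomposition, is only valid for $x,y\notin V_M^{\left\langle M\right\rangle}$; the extension to all of $\mathcal{K}^{\left\langle M\right\rangle}$ should be made explicit via the continuity of $g_M(t,\cdot,\cdot)$.
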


\medskip
\noindent
We also obtain sharp two-sided bounds for the difference $g_M(t,x,y)-g(t,x,y)$. This result has direct important applications in our ongoing project.

\begin{theorem} \label{th:main2}
Let $\mathcal{K}^{\left\langle \infty \right\rangle}$ be the USNF with the GLP. 
Then there exist constants $c_{7}, ..., c_{12}>0$ such that for every $t>0$, $M \in \mathbb{Z}$ and $x,y \in \mathcal{K}^{\left\langle M \right\rangle}$ one has
\begin{align*}
c_{7}  \big(f_{c_{8}}(t,\delta_M(x,y)) \vee h_{c_{9}}(t,M)\big)
& \leq g_M(t,x,y)-g(t,x,y) \\
& \leq c_{10} \big(f_{c_{11}}(t,\delta_M(x,y)) \vee h_{c_{12}}(t,M)\big),
\end{align*}
where $\delta_M(x,y) = \inf_{z \in V^{*}_{M}} (|x-z|+|z-y|)$ with
$$
V^{*}_M := \left\{ z \in V_{M}^{\left\langle M\right\rangle} : \text{there exists} \ \Delta_M \in \mathcal{T}_M \ \text{such that} \ \Delta_M \cap \mathcal{K}^{\left\langle M\right\rangle}  = \{z\}\right\}.
$$
\end{theorem}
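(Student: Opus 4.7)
The starting point is the explicit formula \eqref{eq:refldens}, which I rewrite as
\[
g_M(t,x,y) - g(t,x,y) = \sum_{y' \in \pi_M^{-1}(y)\setminus \{y\}} r(y')\, g(t,x,y') \;+\; (r(y)-1)\, g(t,x,y),
\]
where $r(y') = \mathrm{rank}(y')$ when $y\in V_M^{\left\langle M\right\rangle}$ and $r(y') \equiv 1$ otherwise (with the convention that the extra term vanishes when $y \notin V_M^{\left\langle M\right\rangle}$). Since ranks are uniformly bounded by a constant depending only on the fractal, up to multiplicative constants the problem reduces to controlling $\sum_{y' \in \pi_M^{-1}(y)\setminus \{y\}} g(t,x,y')$ from above and below, and this expression is manifestly non-negative.

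The geometric core of the argument is the following comparison: for every $y' \in \pi_M^{-1}(y)\setminus\{y\}$ there exists $z \in V_M^*$ such that $|x-y'|\geq c_0\bigl(|x-z|+|z-y|\bigr)$, and conversely the preimage $y'_*$ sitting in the $M$-complex adjacent to $\mathcal{K}^{\left\langle M\right\rangle}$ across the minimizer $z_*$ of $\delta_M(x,y)$ satisfies $|x-y'_*|\leq C_0\,\delta_M(x,y)$. This is where the general SNF case really departs from the Sierpi\'nski treatment of \cite{bib:KaPP2}: in the absence of a usable geodesic metric the argument must exploit instead the fact that $\pi_M$ is piecewise an isometry which fixes the connecting vertices $V_M^*$, so that $|x-y'|$ can be reconstructed by ``unfolding'' $y$ through a chain of adjacent $M$-complexes. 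Bounding the length of such chains via the intrinsic growth property of the $M$-graph metric given by Lemma \ref{lem:metrics} is the step I expect to be the main obstacle.

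Given this geometry, the upper bound splits into two pieces. The contribution of preimages lying in the (finitely many) $M$-complexes adjacent to $\mathcal{K}^{\left\langle M\right\rangle}$ is controlled by the sub-Gaussian upper estimate in \eqref{eq:kum} combined with the geometric inequality, producing a total of size $\lesssim f_{c_{11}}(t,\delta_M(x,y))$. The tail over preimages in more distant complexes fits exactly the framework of Lemma \ref{lem:tail} and yields $\lesssim h_{c_{12}}(t,M)$. Taking the maximum gives the claimed upper bound.

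For the lower bound I would keep only the single term corresponding to $y'_*$ (or, when $y \in V_M^{\left\langle M\right\rangle}$, the extra term $(r(y)-1)g(t,x,y)$, for which $\delta_M(x,y)=|x-y|$ trivially) and apply the sub-Gaussian lower bound in \eqref{eq:kum} together with $|x-y'_*|\leq C_0\,\delta_M(x,y)$ to get $g_M(t,x,y)-g(t,x,y)\gtrsim f_{c_8}(t,\delta_M(x,y))$. The floor $h_{c_9}(t,M)$ is only relevant when $t \gtrsim L^{Md_w}$; in that regime Theorem \ref{th:main} already forces $g_M(t,x,y)\gtrsim L^{-d_f M}$, while \eqref{eq:kum} combined with $|x-y|\leq \mathrm{diam}(\mathcal{K}^{\left\langle M\right\rangle})\asymp L^M$ gives $g(t,x,y)\lesssim t^{-d_s/2}\ll L^{-d_fM}$, so the difference remains of order $L^{-d_fM}\asymp h_{c_9}(t,M)$. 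In the complementary regime $t \ll L^{Md_w}$ the function $h_{c_9}(t,M)$ decays exponentially and is already dominated by $f_{c_8}(t,\delta_M(x,y))$ at the scale $\delta_M(x,y)\leq \mathrm{diam}(\mathcal{K}^{\left\langle M\right\rangle})$. Taking the maximum of the two bounds completes the argument.
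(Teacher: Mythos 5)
Your overall route is the paper's: you split $g_M-g$ into the contribution of preimages lying in $M$-complexes adjacent to $\mathcal{K}^{\left\langle M\right\rangle}$ (handled by Lemma \ref{lem:adjacent} plus the subgaussian bounds \eqref{eq:kum}, producing the $f(t,\delta_M(x,y))$ term) and the tail over distant preimages (handled by Lemma \ref{lem:tail}, producing the $h(t,M)$ term); this is exactly the decomposition \eqref{eq:decomposition} into $g^{(2)}_M$ and $g^{(1)}_M$, and your upper bound matches the paper's. The ``main obstacle'' you flag --- recovering $|x-y'|\geq c_0(|x-z|+|z-y|)$ for adjacent preimages --- is precisely the content of Lemma \ref{lem:adjacent} and can be used as a black box; for non-adjacent preimages no such pointwise comparison is needed, since Lemma \ref{lem:tail} bounds that sum wholesale. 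The reduction to $x,y\in\mathcal{K}^{\left\langle M\right\rangle}\setminus V^{\left\langle M\right\rangle}_M$ followed by a continuity argument (as in the paper) is cleaner than carrying the rank factors explicitly, but your treatment of them is harmless.

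The genuine gap is in your lower bound for the $h_{c_{9}}(t,M)$ term. You argue that for $t\gtrsim L^{Md_w}$ Theorem \ref{th:main} gives $g_M(t,x,y)\gtrsim L^{-Md_f}$ while \eqref{eq:kum} gives $g(t,x,y)\lesssim t^{-d_s/2}\ll L^{-Md_f}$, so the difference stays of order $L^{-Md_f}$. But $t^{-d_s/2}\ll L^{-Md_f}$ fails near the edge of that regime: at $t\asymp L^{Md_w}$ one has $t^{-d_s/2}\asymp L^{-Md_f}$, so you are subtracting an upper bound of order $L^{-Md_f}$ from a lower bound of the same order with no control on the relative constants, and the resulting bound on the difference can be vacuous. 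The window $L^{Md_w}\leq t\leq K L^{Md_w}$ is not covered by your argument as written. The repair needs no case analysis and uses a tool you already cite for the upper bound: $g^{(1)}_M(t,x,y)$ is a sum of nonnegative terms contained in $g_M-g$, so the \emph{lower} bound of Lemma \ref{lem:tail} yields $g_M(t,x,y)-g(t,x,y)\geq g^{(1)}_M(t,x,y)\geq c_{21}h_{c_{22}}(t,M)$ for all $t>0$ directly, with no subtraction --- which is what the paper does. A secondary point: your claim that $h_{c_{9}}(t,M)$ is dominated by $f_{c_{8}}(t,\delta_M(x,y))$ when $t\ll L^{Md_w}$ requires choosing $c_{9}$ large relative to $c_{8}$, since $\delta_M(x,y)$ can be comparable to $L^M$; this is routine but should be stated.
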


\bigskip
\noindent
We give the proofs of the above theorems after sequence of auxiliary results. First we fix some useful notation. 
For $M \in \mathbb{Z}$ and $y \in \mathcal{K}^{\left\langle M\right\rangle} \backslash V_{M}^{\left\langle M\right\rangle}$ we let
\begin{equation*}
A(M,m,y) = \left\{ y' \in \pi_M^{-1}(y) : y' \in \mathcal{K}^{\left\langle M+m+1\right\rangle} \backslash \mathcal{K}^{\left\langle M+m\right\rangle} \right\}, \quad m \geq 0,
\end{equation*}
and
\begin{equation*}
B(M,0,y) = \left\{ y' \in \pi_M^{-1}(y) : y' \in \mathcal{K}^{\left\langle M+1\right\rangle} \backslash \mathcal{K}^{\left\langle M\right\rangle}, \Delta_M\left(y'\right) \cap \mathcal{K}^{\left\langle M\right\rangle} = \emptyset \right\},
\end{equation*}
\begin{equation*}
C(M,0,y) = \left\{ y' \in \pi_M^{-1}(y) : y' \in \mathcal{K}^{\left\langle M+1\right\rangle} \backslash \mathcal{K}^{\left\langle M\right\rangle}, \Delta_M\left(y'\right) \cap \mathcal{K}^{\left\langle M\right\rangle} \neq \emptyset \right\},
\end{equation*}
so that
$$
A(M,0,y) = B(M,0,y) \cup C(M,0,y).
$$

Then we can decompose the fiber of $y$ as follows
\begin{equation*}
\pi_M^{-1}(y) = \bigcup_{m\geq 1} A(M,m,y) \cup B(M,0,y) \cup C(M,0,y) \cup \{y\},
\end{equation*}
and, consequently, for every $x,y \in \mathcal{K}^{\left\langle M\right\rangle} \backslash V_{M}^{\left\langle M\right\rangle}$,
\begin{align} \label{eq:decomposition}
g_M (t,x,y) & = \underbrace{\sum_{m\geq 1} \sum_{y' \in A(M,m,y)} g(t,x,y') + \sum_{y' \in B(M,0,y)} g(t,x,y')}_{=:g^{(1)}_M(t,x,y)} \nonumber \\ 
& \ \ \ \ \ \ \ \ \ \ \ \ \ \ \ \ \ \ \ \ \ \ \ \ \ \ \ \ \ \ \ \ \ \ \ \ \ \ \ \ \ \ \ \
+ \underbrace{\sum_{y' \in C(M,0,y)} g(t,x,y')}_{=:g^{(2)}_M(t,x,y)} + \, g(t,x,y). 
\end{align}
Note that $B(M,0,y)$ can be an empty set (the only planar example of $\mathcal{K}^{\left\langle \infty \right\rangle}$ with this property is the Sierpi\'nski gasket). Here we use the convention that the summation over an empty set always gives $0$. 

The following lemma will be used in proving our estimates for the function $g^{(1)}_M(t,x,y)$. 
It can be interpreted as the intrinsic growth property of the graph metric.   

\begin{lemma}[{\cite[Lem. A.2]{bib:KOPP}}]
\label{lem:metrics}
For every $M \in \mathbb{Z}$ and every $x,y \in \mathcal{K}^{\left\langle \infty \right\rangle}$ we have 
\begin{equation}
\label{eq:main_ass}
c_{17} L^{-M} \left|x-y\right| \leq d_M\left(x,y\right) \leq \max \left\{2, c_{18} N^{-M} \left|x-y\right|^{d_f} \right\},
\end{equation}
where $c_{17}$, $c_{18}$ are independent of $x$, $y$ and $M$.\\
\end{lemma}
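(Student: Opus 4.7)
The plan is to handle the easy lower bound via the diameter of a single $M$-complex, and the upper bound by reducing to the case $M=0$ and exploiting the already-established fact (noted after \eqref{eq:kum}) that $|x-y|\le L^M$ forces $d_M(x,y)\le n_0$ for a universal constant $n_0\in\mathbb{N}$.

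For the lower bound I would fix a realizing chain $\Delta_M^{(1)},\ldots,\Delta_M^{(n)}$ with $n=d_M(x,y)$. Each $\Delta_M^{(i)}$ is a translate of $L^M\mathcal{K}^{\langle 0\rangle}$, so has Euclidean diameter $L^M\diam(\mathcal{K}^{\langle 0\rangle})$; picking one point in each non-empty intersection $\Delta_M^{(i)}\cap\Delta_M^{(i+1)}$ and summing the triangle inequality along the chain yields $|x-y|\le nL^M\diam(\mathcal{K}^{\langle 0\rangle})$, so $c_{17}=1/\diam(\mathcal{K}^{\langle 0\rangle})$ works.

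For the upper bound, the scale-invariance $d_M(x,y)=d_0(L^{-M}x,L^{-M}y)$ combined with $N=L^{d_f}$ gives $N^{-M}|x-y|^{d_f}=(L^{-M}|x-y|)^{d_f}$, so it suffices to prove $d_0(x,y)\le\max\{2,c_{18}|x-y|^{d_f}\}$ for $x,y\in\mathcal{K}^{\langle\infty\rangle}$. When $|x-y|\ge 1$, I would pick $k\ge 0$ with $L^k\le|x-y|<L^{k+1}$ and apply the cited fact at level $k+1$: this yields a chain of at most $n_0$ many $(k+1)$-complexes joining $x$ to $y$. Each such $(k+1)$-complex decomposes into $N^{k+1}$ many $0$-subcomplexes which form a connected $0$-adjacency subgraph by iterating the Connectivity axiom of Definition~\ref{def:snf}(5); stitching traversals across the chain (using that consecutive $(k+1)$-complexes share a vertex which lies in some $0$-subcomplex of each by Nesting) yields $d_0(x,y)\le n_0 N^{k+1}\le n_0 N\,|x-y|^{d_f}$, as required.

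The remaining case $|x-y|<1$ is the delicate part. The $\max\{2,\,\cdot\,\}$ on the right-hand side asks for the clean bound $d_0(x,y)\le 2$ for small $|x-y|$, i.e.\ that $x$ and $y$ share a $0$-complex or lie in adjacent ones. To establish this I would argue that, by planarity, self-similarity and the Open Set and Nesting axioms, any two non-adjacent $0$-complexes of $\mathcal{K}^{\langle\infty\rangle}$ are separated by a universal Euclidean distance $r_0>0$: by self-similarity the question reduces to the finitely many pairs of $0$-subcomplexes inside a single large ancestor complex, where no two non-touching pieces can overlap by the Open Set Condition. Then $|x-y|\le r_0$ forces $d_0(x,y)\le 2$, and the intermediate range $r_0<|x-y|<1$ is absorbed into $c_{18}$ via the $M=0$ instance of the cited fact. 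I expect the verification of this uniform separation $r_0$ to be the main obstacle, since a priori non-touching fractal pieces could approach each other arbitrarily closely; the clean constant $2$ in the statement reflects genuine geometric rigidity of planar SNF's with GLP rather than a soft covering bound.
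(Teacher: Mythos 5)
The paper does not actually prove this lemma: it is imported verbatim from the companion paper \cite[Lem.~A.2]{bib:KOPP}, so there is no in-text argument to compare against. Taken on its own terms, your reconstruction is essentially sound. The lower bound by chaining diameters along a realizing sequence of $M$-complexes is correct, as is the scaling reduction $d_M(x,y)=d_0(L^{-M}x,L^{-M}y)$ combined with $N=L^{d_f}$, and the stitching of $0$-subcomplex traversals through a chain of at most $n_0$ many $(k+1)$-complexes (Nesting guarantees that consecutive complexes in the chain meet in a vertex that is a vertex of a $0$-subcomplex of each, and iterating the Connectivity axiom gives connectedness of the $0$-subcomplex adjacency graph), which yields the $N^{-M}|x-y|^{d_f}$ growth. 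The one step you flag as the ``main obstacle'' --- a uniform Euclidean separation $r_0>0$ between disjoint $0$-complexes --- is not something you need to establish from scratch: it is exactly Lemma~\ref{lem:const} of this paper with $m=0$, itself quoted from \cite[Cor.~A.1]{bib:KOPP}. Its contrapositive gives precisely what you want: if $|x-y|<c_{19}$, then no $0$-complex containing $x$ can be disjoint from a $0$-complex containing $y$, hence $d_0(x,y)\le 2$; the intermediate range $c_{19}\le|x-y|<1$ is absorbed into $c_{18}$ via $d_0(x,y)\le n_0$ exactly as you propose. Two small caveats: your heuristic for why $r_0$ exists (``finitely many pairs inside a single ancestor complex'') is too quick as stated, since the smallest common ancestor of two disjoint $0$-complexes can be arbitrarily large, so one really needs the scaled statement for disjoint $m$-complexes and an induction over scales, which is what \cite[Cor.~A.1]{bib:KOPP} supplies; and the GLP plays no role in this separation property. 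With Lemma~\ref{lem:const} invoked in place of your ad hoc separation argument, your proof is complete.
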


The next two lemmas will be applied to get the upper bounds for the function $g^{(2)}_M(t,x,y)$ in the decomposition \eqref{eq:decomposition}. 

\begin{lemma}
\label{lem:const}
There exist a constant $c_{19} >0$ with the following property. For every $x,y \in \mathcal{K}^{\left\langle \infty \right\rangle}$ and $m \in \mathbb{Z}$ \, such that $x \in \Delta_m^{(1)}$, $y \in \Delta_m^{(2)}$ and $\Delta_m^{(1)} \cap \Delta_m^{(2)} = \emptyset$ we have $|x-y| \geq c_{19}L^m$.
\end{lemma}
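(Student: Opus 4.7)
The plan is to deduce Lemma \ref{lem:const} from the upper bound in Lemma \ref{lem:metrics}, exploiting the fact that the maximum appearing there cannot equal $2$ once the graph distance $d_m$ strictly exceeds $2$. To that end, the first step is to replace the given points $x,y$ by auxiliary ones lying outside the vertex set. Concretely, I pick $x' \in \Delta_m^{(1)} \setminus V_m^{\left\langle \infty \right\rangle}$ and $y' \in \Delta_m^{(2)} \setminus V_m^{\left\langle \infty \right\rangle}$; such points exist and are in fact dense in the respective $m$-complexes because $V_m^{\left\langle \infty \right\rangle}$ is countable while each $\Delta_m^{(i)}$, being a scaled-and-translated copy of the non-trivial self-similar attractor $\mathcal{K}^{\left\langle 0 \right\rangle}$, has no isolated points. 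By Definition \ref{def:obj}(7) each auxiliary point lies in a unique $m$-complex, namely $\Delta_m(x') = \Delta_m^{(1)}$ and $\Delta_m(y') = \Delta_m^{(2)}$.

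I next claim $d_m(x',y') \geq 3$. Indeed, suppose a chain $\Delta_m^{(1')},\ldots,\Delta_m^{(n')} \in \mathcal{T}_m$ realises the graph distance with $n \leq 2$. The uniqueness noted above forces $\Delta_m^{(1')} = \Delta_m^{(1)}$ and $\Delta_m^{(n')} = \Delta_m^{(2)}$. If $n=1$ this would mean $\Delta_m^{(1)} = \Delta_m^{(2)}$, while if $n=2$ it would mean $\Delta_m^{(1)} \cap \Delta_m^{(2)} \neq \emptyset$; both contradict the hypothesis $\Delta_m^{(1)} \cap \Delta_m^{(2)} = \emptyset$. Consequently the upper estimate in \eqref{eq:main_ass} (applied with $M = m$) must have its maximum attained by the second term, giving
$$
3 \,\leq\, d_m(x',y') \,\leq\, c_{18} N^{-m} |x'-y'|^{d_f},
$$
which rearranges to $|x'-y'| \geq (3/c_{18})^{1/d_f} L^m$ after invoking the standard identity $N = L^{d_f}$ valid for self-similar fractals satisfying the open set condition.

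To finish, I approximate: by the density noted above, I choose sequences $x'_n \to x$ in $\Delta_m^{(1)} \setminus V_m^{\left\langle \infty \right\rangle}$ and $y'_n \to y$ in $\Delta_m^{(2)} \setminus V_m^{\left\langle \infty \right\rangle}$, apply the inequality to each pair, and pass to the limit. This yields $|x-y| \geq (3/c_{18})^{1/d_f} L^m$, and the lemma follows with $c_{19} := (3/c_{18})^{1/d_f}$. The only delicate point is the chain argument: one really must have \emph{both} endpoints outside $V_m^{\left\langle \infty \right\rangle}$, since otherwise a length-$2$ chain could bypass $\Delta_m^{(1)}$ or $\Delta_m^{(2)}$ via some shared vertex of neighbouring complexes. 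Thus the reduction to interior auxiliary points, rather than a direct application of Lemma \ref{lem:metrics} to $x,y$ themselves, is the key move.
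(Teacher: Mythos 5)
Your proof is correct, but it takes a genuinely different route from the paper. The paper disposes of this lemma in one line: it invokes \cite[Cor.~A.1]{bib:KOPP}, which gives a uniform positive lower bound on the Euclidean distance between any two disjoint $0$-complexes, and then rescales by $L^m$. You instead derive the separation bound from the graph-metric growth property of Lemma~\ref{lem:metrics}: for points off the vertex set $V_m^{\left\langle \infty \right\rangle}$ lying in disjoint $m$-complexes the graph distance $d_m$ is at least $3$, which forces the second branch of the maximum in the upper bound of \eqref{eq:main_ass} and yields $|x'-y'| \geq (3/c_{18})^{1/d_f} L^m$ via the identity $N = L^{d_f}$ (which the paper itself uses in the proof of Lemma~\ref{lem:tail}); a density-and-limit step then removes the restriction to non-vertex points. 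The individual steps check out: for $x' \notin V_m^{\left\langle \infty \right\rangle}$ the complex $\Delta_m(x')$ is unique, so the endpoints of any admissible chain are pinned down and $n \leq 2$ is indeed impossible, and you are right that this reduction is essential, since an endpoint at a vertex shared with a third complex could admit a length-$2$ chain and leave the maximum in \eqref{eq:main_ass} stuck at $2$. What your approach buys is self-containedness relative to the results already quoted in this paper, with no further import from \cite{bib:KOPP}, at the price of the extra approximation argument. One caveat worth recording: since Lemma~\ref{lem:metrics} is itself quoted from \cite[Lem.~A.2]{bib:KOPP}, where it may well be proved using the very separation estimate you are deriving, your argument is non-circular only when Lemma~\ref{lem:metrics} is taken as a black box, as this paper does.
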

\begin{proof} 
The lemma follows from \cite[Cor. A.1]{bib:KOPP} by scaling.
\end{proof}

\begin{lemma}
\label{lem:adjacent}
There exists an absolute constant $c_{20} >0$ with the following property.
If $x,y \in \mathcal{K}^{\left\langle M \right\rangle}$ and $y' \in \pi_M^{-1}(y) \backslash \{y\}$ is inside an $M$-complex $\Delta_M$ adjacent to $\mathcal{K}^{\left\langle M \right\rangle}$ such that $\Delta_M \cap \mathcal{K}^{\left\langle M \right\rangle} = \{z\}$ for some $z \in V_{M}^{\left\langle M\right\rangle}$, then
$$|x-y'| \geq c_{20} \left(|x-z| + |z-y|\right).$$
In particular, $|x-y'| \geq c_{20} |x-y|$. 
\end{lemma}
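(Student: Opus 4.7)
The plan is to combine an isometry argument at $z$ with a uniform angular-gap estimate at $z$, and then to finish with the law of cosines. Note first that we may assume $y \neq z$: otherwise the unique preimage of $z$ in $\Delta_M$ under the (injective) map $\pi_M|_{\Delta_M}$ would be $z$ itself, contradicting $y' \neq y$.

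I would first show that $|z-y'| = |z-y|$. Writing $\Delta_M = \mathcal{K}^{\langle M\rangle} + v$ with $v = \sum_{j=M+1}^J L^j \nu_{i_j}$, the map $\pi_M|_{\Delta_M}(\,\cdot\,) = R_{\Delta_M}(\,\cdot\,-v)$ is a composition of a translation and a rotation, hence an isometry. Since $z \in \mathcal{K}^{\langle M\rangle}\cap\Delta_M$, the GLP allows $\pi_M(z)$ to be computed via $\mathcal{K}^{\langle M\rangle}$ instead; the associated rotation $R_{\mathcal{K}^{\langle M\rangle}}$ must fix every element of $V_M^{\langle M\rangle}$ (it preserves labels and the labelling is a bijection on $V_M^{\langle M\rangle}$, whose cardinality is $k \geq 2$), hence is the identity. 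Therefore $\pi_M(z) = z$, and applying the isometry to the pair $(z,y')$ yields $|z-y'| = |\pi_M(z)-\pi_M(y')| = |z-y|$.

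Next I would establish a uniform angular-gap bound. By \cite[Prop.~2.1]{bib:KOPP} every $M$-complex is a regular $k$-gon, so at the shared vertex $z$ both $\mathcal{K}^{\langle M\rangle}$ and $\Delta_M$ occupy closed cones of aperture $(k-2)\pi/k < \pi$. The hypothesis $\mathcal{K}^{\langle M\rangle}\cap\Delta_M = \{z\}$ forbids a shared edge at $z$, so these two cones cannot share a boundary ray, and their angular gap is strictly positive. By self-similarity of $\mathcal{K}^{\langle\infty\rangle}$, the clusters of $M$-complexes around a vertex are, up to scaling and Euclidean isometry, the same as the clusters around the vertices of $\mathcal{K}^{\langle 0\rangle}$, of which there are only finitely many types. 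Hence the angular gap takes only finitely many values and is bounded below by some $\theta_0 > 0$ depending only on the fractal, giving $\angle xzy'\geq\theta_0$ for every $x \in \mathcal{K}^{\langle M\rangle}\setminus\{z\}$ and $y' \in \Delta_M\setminus\{z\}$.

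To conclude, I would invoke the algebraic identity $a^2 + b^2 - 2ab\cos\theta = \tfrac{1-\cos\theta}{2}(a+b)^2 + \tfrac{1+\cos\theta}{2}(a-b)^2$, valid for $\theta\in[0,\pi]$, applied with $a=|x-z|$, $b=|z-y'|$ and $\theta=\angle xzy'$. Using $\cos\theta\leq\cos\theta_0$ and the first step,
\[
|x-y'|^2 \geq \tfrac{1-\cos\theta_0}{2}\bigl(|x-z|+|z-y'|\bigr)^2 = \tfrac{1-\cos\theta_0}{2}\bigl(|x-z|+|z-y|\bigr)^2,
\]
proving the main inequality with $c_{20} = \sqrt{(1-\cos\theta_0)/2}$; the final statement then follows from the triangle inequality $|x-z|+|z-y|\geq|x-y|$. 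The main obstacle is the angular-gap bound: one must use the self-similar structure and the GLP carefully to reduce the possible local configurations of $M$-complexes at a vertex to a finite list, from which the positivity and uniformity of $\theta_0$ are extracted.
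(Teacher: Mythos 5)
Your first step is correct and is a genuinely nice observation: since $\pi_M$ restricted to $\Delta_M$ is the isometry $R_{\Delta_M}(\,\cdot\,-v)$ and the GLP forces $\pi_M(z)=z$ (the rotation attached to $\mathcal{K}^{\left\langle M \right\rangle}$ itself must fix every labelled vertex, hence is the identity), you do get $|z-y'|=|z-y|$. The law-of-cosines reduction at the end is also fine. The problem is the middle step, which is exactly where you yourself locate the main obstacle, and the justification you give for it does not hold up. Your angular-gap bound rests on the claim that $\mathcal{K}^{\left\langle M \right\rangle}$ and $\Delta_M$ each \emph{occupy} a closed cone of aperture $(k-2)\pi/k$ at $z$. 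What \cite[Prop.~2.1]{bib:KOPP} gives is only that the vertex set $V(\Delta_M)$ is the vertex set of a regular $k$-gon; it does not assert $\Delta_M\subseteq\mathrm{conv}(V(\Delta_M))$, and that containment is a nontrivial geometric property of the attractor which is established nowhere in the paper (and which degenerates entirely when $k=2$, a case admitted by Definition~\ref{def:snf}(1)). Without it, points of $\mathcal{K}^{\left\langle M \right\rangle}$ near $z$ need not stay inside the polygon's corner cone and the angle $\angle xzy'$ has no a priori lower bound. A second, smaller issue in the same step: even granting the containment, the hypothesis $\Delta_M\cap\mathcal{K}^{\left\langle M \right\rangle}=\{z\}$ constrains the intersection of the \emph{fractal} sets, while your ``no shared boundary ray, hence positive gap'' argument needs the two \emph{polygons} to have disjoint interiors near $z$; ruling out overlapping convex hulls requires an extra argument (the open set $U$ in the Open Set Condition is not assumed to be the polygon's interior).

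The paper's proof sidesteps all of this angular geometry. It takes the smallest $m$ with $x,y\in\Delta_m^{(1)}(z)$, where $\Delta_m^{(1)}(z)$ is the $m$-complex at $z$ lying inside $\mathcal{K}^{\left\langle M \right\rangle}$; this immediately gives $|x-z|,|y-z|\leq c_1'L^m$ with $c_1'$ the diameter of a $0$-complex. Minimality of $m$ then places $x$ and $y'$ in disjoint $(m-1)$-complexes, so Lemma~\ref{lem:const} yields $|x-y'|\geq c_{19}L^{m-1}$, and the two bounds combine to give the claim with $c_{20}=c_{19}/(2c_1'L)$. The only geometric input is the separation constant of Lemma~\ref{lem:const}, already imported from \cite[Cor.~A.1]{bib:KOPP}. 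If you want to salvage your route, you would need to prove (not cite) that every planar SNF with $k\geq 3$ is contained in the convex hull of its essential fixed points and that distinct $M$-complexes meeting at a single point have convex hulls with a uniformly positive angular separation there; as written, that is the missing content of the lemma rather than a consequence of known facts.
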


\begin{proof}
Assume first that $x, y \in \mathcal{K}^{\left\langle M \right\rangle} \backslash V_{M}^{\left\langle M\right\rangle}$ and let $y' \in \pi_M^{-1}(y)$ be such that $\Delta_M(y') \cap \mathcal{K}^{\left\langle M \right\rangle} = \{z\}$ with some $z \in V_{M}^{\left\langle M\right\rangle}$.

Observe that for every $m\leq M$ the vertex $z$ lies at the intersection of two $m$-complexes $\Delta_{m}^{(1)}(z)$ and $\Delta_{m}^{(2)}(z)$ such that $\Delta_{m}^{(1)}(z) \subseteq \mathcal{K}^{\left\langle M \right\rangle}$ and $\Delta_{m}^{(2)}(z) \nsubseteq \mathcal{K}^{\left\langle M \right\rangle}$.

Let now $m \in \mathbb{Z}$ be the smallest integer for which $x,y \in \Delta_{m}^{(1)}(z)$. Then we have $|x-z|, |y-z| \leq L^m c_1^{\prime}$, where $c_1^{\prime}$ is the diameter of any $0$-complex. On the other hand, as $x \notin \Delta_{m-1}^{(1)}(z)$ or $y \notin \Delta_{m-1}^{(1)}(z)$, we see that $x$ and $y'$ are in disjoint $(m-1)$-complexes. Those $(m-1)$-complexes are included in the two different $m$-complexes and at most one of them is attached to $z$. Then we get from Lemma \ref{lem:const} that $|x-y'| \geq L^{m-1} c_{19}$ and, in consequence,
\begin{equation*}
|x-z| + |z-y| \leq \frac{2 c_1^{\prime} L}{c_{19}} |x-y'|.
\end{equation*}
By the continuity of the Euclidean distance, the same is true for every $x,y \in \mathcal{K}^{\left\langle M \right\rangle}$. The second assertion follows from the triangle inequality for such a distance. The lemma holds with $c_{20} = c_{19}/(2c_1^{\prime} L)$.
\end{proof}

We now give the two-sided bounds for the function $g^{(1)}_M(t,x,y)$ which are the first crucial ingredient of the proofs of our main results.
This is the case when $y'$ under the sums in \eqref{eq:decomposition} are far away from $x$. 

\begin{lemma}
\label{lem:tail}
For every $t>0$, $M \in \mathbb{Z}$ and $x,y \in \mathcal{K}^{\left\langle M\right\rangle}  \backslash V_{M}^{\left\langle M\right\rangle}$, one has
\begin{align}
\label{eq:estimate1g}
c_{21} h_{c_{22}}(t,M) \leq g^{(1)}_M(t,x,y) \leq c_{23} h_{c_{24}}(t,M),
\end{align}
with certain numerical constants $c_{21}, ..., c_{24} >0$ (independent of $M$, $t$ and $x, y$).

\end{lemma}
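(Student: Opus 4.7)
The plan is to bound $g^{(1)}_M(t,x,y)$ by combining the subgaussian estimates \eqref{eq:kum} with geometric control of the preimages $y'\in\pi_M^{-1}(y)$ appearing in the two defining sums. The key geometric input is the separation estimate $|x-y'|\geq c\, L^{M+m}$ (with $c$ independent of $M$, $m$) for every $y'\in B(M,0,y)$ and every $y'\in A(M,m,y)$ with $m\geq 1$. For $y'\in B(M,0,y)$ this is immediate from Lemma~\ref{lem:const} at scale $M$, since $\Delta_M(y')\cap \mathcal{K}^{\langle M\rangle}=\emptyset$ by definition. For $y'\in A(M,m,y)$ with $m\geq 1$, $y'$ lies in an $(M+m)$-complex distinct from $\mathcal{K}^{\langle M+m\rangle}$; if these two complexes are disjoint the claim follows from Lemma~\ref{lem:const} applied at scale $M+m$, and in the vertex-sharing case Lemma~\ref{lem:metrics} supplies the control, because the graph distance $d_M(x,y')$ must grow with $m$ and the second inequality in that lemma then forces $|x-y'|$ to be correspondingly large. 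I also need the count $|A(M,m,y)|=(N-1)N^m$ and $|B(M,0,y)|\leq C$, which hold because $y\in\mathcal{K}^{\langle M\rangle}\backslash V_M^{\langle M\rangle}$ has exactly one preimage in each $M$-complex and $\mathcal{K}^{\langle M+m+1\rangle}\backslash\mathcal{K}^{\langle M+m\rangle}$ contains $(N-1)N^m$ such complexes.

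For the upper bound, feeding these ingredients into \eqref{eq:kum} gives, with $\beta:=L^{d_w/(d_J-1)}>1$ and $\gamma:=(L^{Md_w}/t)^{1/(d_J-1)}$, the inequality
\begin{equation*}
g^{(1)}_M(t,x,y)\;\leq\; C\,t^{-d_s/2}\sum_{m\geq 0} L^{d_f m}\exp\bigl(-c\,\gamma\beta^{m}\bigr).
\end{equation*}
I split on the sign of $\gamma-1$. If $\gamma\geq 1$, extracting $e^{-(c/2)\gamma}$ from each summand via $\beta^m\geq 1$ reduces the sum to at most $e^{-(c/2)\gamma}\sum_m L^{d_f m}e^{-(c/2)\beta^m}$, a convergent series, and yields $g^{(1)}_M\leq C\,t^{-d_s/2}e^{-(c/2)\gamma}$; since $\gamma\,e^{-(c/2-c'')\gamma}$ is bounded on $[1,\infty)$ for any $c''<c/2$, this is $\leq C'\,h_{c''}(t,M)$. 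If $\gamma<1$, the summand peaks at $m^*\sim\log_\beta(1/\gamma)$, where $L^{M+m^*}\sim t^{1/d_w}$. Splitting the sum at $m^*$, the part $m<m^*$ is controlled by the geometric series $\sum L^{d_f m}$ ending at $m^*$, while the tail $m\geq m^*$ converges geometrically; both are $\leq C\,L^{d_f m^*}\sim(t^{1/d_w}/L^M)^{d_f}$. Multiplying by $t^{-d_s/2}=t^{-d_f/d_w}$ gives $g^{(1)}_M\leq C\,L^{-Md_f}$, which agrees with $h_{c''}(t,M)$ in this regime up to constants.

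For the lower bound, I exhibit a sufficient subset of preimages directly. When $\gamma\geq 1$, a single preimage at Euclidean distance $\sim L^M$ (taken from $B(M,0,y)$, or from $A(M,1,y)$ when $B(M,0,y)=\emptyset$, as for the Sierpi\'nski gasket) gives, via the lower estimate in \eqref{eq:kum}, a contribution $\geq c\,t^{-d_s/2}e^{-c'\gamma}$; choosing $c_{22}\geq c'$ makes this dominate $h_{c_{22}}(t,M)=t^{-d_s/2}\gamma^{-1}e^{-c_{22}\gamma}$, because $\gamma\, e^{(c_{22}-c')\gamma}\geq 1$ on $[1,\infty)$. When $\gamma<1$, I keep all preimages at Euclidean distance $\lesssim t^{1/d_w}$; by the $d_f$-set structure of $\mathcal{K}^{\langle\infty\rangle}$ there are $\gtrsim(t^{1/d_w}/L^M)^{d_f}$ of them, each contributing $\gtrsim t^{-d_s/2}$ by the lower bound in \eqref{eq:kum}, which totals $\gtrsim L^{-Md_f}\sim h_{c''}(t,M)$. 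The main obstacle is the separation estimate in the vertex-sharing case: here Lemma~\ref{lem:const} does not apply directly and one must invoke the intrinsic-growth property encoded in Lemma~\ref{lem:metrics}, which is precisely the advantage over the Sierpi\'nski-specific treatment of \cite{bib:KaPP2}.
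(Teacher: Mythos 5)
Your architecture for the upper bound (separation of preimages $+$ counting $+$ summation of the resulting series) is the right one, but the crucial step --- the separation estimate $|x-y'|\geq c\,L^{M+m}$ for $y'\in A(M,m,y)$ when $\Delta_{M+m}(y')$ and $\mathcal{K}^{\langle M+m\rangle}$ are \emph{not} disjoint --- is not actually established, and the mechanism you sketch would fail. Inverting the second inequality of Lemma~\ref{lem:metrics} at scale $M$ gives $|x-y'|\geq L^{M}\left(d_M(x,y')/c_{18}\right)^{1/d_f}$, so to reach $L^{M+m}=L^{M}N^{m/d_f}$ you would need $d_M(x,y')\gtrsim N^{m}$. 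That is false in general: the number of $M$-complexes needed to cross $m$ scales grows at the chemical-distance rate (for the gasket, $2^{m}$ versus $N^{m}=3^{m}$), and Lemma~\ref{lem:metrics} itself only guarantees $d_M(x,y')\gtrsim L^{m}$ from below. The weaker separation $|x-y'|\gtrsim L^{M+m/d_f}$ that this yields is not enough: feeding it into \eqref{eq:kum} and summing against the count $N^{m}(N-1)$ produces, in the regime $t>L^{Md_w}$, a bound of order $L^{-Md_f}\bigl(t^{1/d_w}/L^{M}\bigr)^{d_f(d_f-1)}$, which exceeds $h_c(t,M)\approx L^{-Md_f}$ by an unbounded factor. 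The fix --- and it is the one essential idea of the paper's proof --- is to apply Lemma~\ref{lem:metrics} at scale $M+m-1$ rather than $M$: since $x\in\mathcal{K}^{\langle M\rangle}\subseteq\mathcal{K}^{\langle M+m-1\rangle}$ while $y'\notin\mathcal{K}^{\langle M+m\rangle}$, one has $d_{M+m-1}(x,y')>2$, so the maximum in the second inequality must be attained by its second term and $|x-y'|^{d_f}\geq (2/c_{18})N^{M+m-1}$ follows immediately, with no disjoint/adjacent case split and no need for Lemma~\ref{lem:const} at all. (The same device, $d_{M-1}(x,y')>2$, handles $B(M,0,y)$.)

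The remaining ingredients are fine. Your count $\#A(M,m,y)=(N-1)N^{m}$ is what the paper uses, and your direct summation of the series (splitting on whether $\gamma=(L^{Md_w}/t)^{1/(d_J-1)}$ exceeds $1$) is a legitimate substitute for the paper's comparison with the integral $\int_{N^{M-1}}^{\infty}e^{-\eta\xi^{\gamma}}d\xi$ followed by an elementary tail estimate. Your lower bound is a genuinely different route from the paper's: the paper keeps \emph{all} of $\bigcup_{m\geq1}A(M,m,y)$, uses $d_{M+m+1}(x,y')=1$ together with the \emph{first} inequality of Lemma~\ref{lem:metrics} to get $|x-y'|\leq c_{17}^{-1}L^{M+m+1}$, and again compares to an integral; your ``few witnesses'' version is workable and more transparent, but you should (i) certify $|x-y'|\leq CL^{M}$ for the chosen point of $A(M,1,y)$ by exactly that first inequality (it is not automatic from $B(M,0,y)$ possibly being empty), and (ii) note that only boundedly many preimages (those in $C(M,0,y)$ and $y$ itself) are excluded from $g^{(1)}_M$, so the $\gtrsim(t^{1/d_w}/L^{M})^{d_f}$ count survives once $t^{1/d_w}/L^{M}$ exceeds a fixed constant, with the single-witness bound covering the intermediate range.
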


\begin{proof}
Let $M \in \mathbb{Z}$, $t>0$ and $x,y \in \mathcal{K}^{\left\langle M\right\rangle} \backslash V_{M}^{\left\langle M\right\rangle}$.
We now prove the upper bound and the lower bound separately. \\

\noindent
THE UPPER BOUND. 
Observe that for every fixed $m \geq 1$ and $y' \in A(M,m,y)$ we have $d_{M+m-1}\left(x,y'\right) >2$. Then, by applying the upper bound in \eqref{eq:main_ass} for $d_{M+m-1}(x,y')$, we get 
$$\left|x-y'\right|^{d_f} \geq \frac{2}{c_{18}} N^{M+m-1}.$$
Moreover, the number of such points $y'$ is equal to the number of $M$-complexes inside $\mathcal{K}^{\left\langle M+m+1\right\rangle} \backslash \mathcal{K}^{\left\langle M+m\right\rangle}$, i.e. $N^{m} (N-1)$. Analogously, when $y' \in B(M,0,y)$, then $d_{M-1} \left(x,y'\right) >2$, which gives 
$$\left|x-y'\right|^{d_f} \geq \frac{2}{c_{18}} N^{M-1}.$$ 
There are less than $N-1$ of such points.

Then, by using the decomposition in \eqref{eq:decomposition}, the upper subgaussian estimate for $g(t,x,y')$ and the above observations, we get
\begin{align*}
& g^{(1)}_M(t,x,y) \\
& \ \leq c_1^{\prime} t^{-\frac{d_s}{2}} \sum_{m \geq 0} N^{m}(N-1) \exp\left( -c_2^{\prime} \left( \frac{\left(\frac{2}{c_{18}}\right)^{\frac{d_w}{d_f}} N^{(M+m-1)\frac{d_w}{d_f}}}{t} \right)^{\frac{1}{d_J -1}} \right) \\
& \ = c_1^{\prime} N^{-M+1}t^{-\frac{d_s}{2}} \sum_{m \geq 0} N^{M+m-1} (N-1)  \exp\left( -c_3^{\prime} \left( \frac{N^{(M+m)\frac{d_w}{d_f}} }{t} \right)^{\frac{1}{d_J -1}} \right),
\end{align*}
with an appropriate absolute positive constant $c_3^{\prime}$.
 
Since for any $\eta, \gamma>0$ we have
$$
\int_{N^{M-1}}^{\infty} e^{-\eta \xi^{\gamma}} d\xi = \sum_{m=0}^{\infty} \int_{N^{M-1+m}}^{N^{M+m}} e^{-\eta \xi^{\gamma}} d\xi \geq  \sum_{m=0}^{\infty} N^{M+m-1}(N-1) e^{- \eta N^{(M+m)\gamma}},
$$
the above series can be estimated above by an appropriate integral. We then get
\begin{align*}
g^{(1)}_M(t,x,y)
& \leq  \frac{c_4^{\prime}}{L^{M d_f} \ t^{d_s / 2}} \int_{N^{M-1}}^{\infty} \exp \left( -c_3^{\prime}  \left( \frac{\xi^{d_w / d_f}}{t} \right)^{\frac{1}{d_J -1}} \right) d\xi \\
& = \frac{c_4^{\prime}}{L^{M d_f} \ t^{d_f / d_w}} \int_{N^{M-1}}^{\infty} \exp \left(-c_3^{\prime}  \left( \frac{\xi^{1 / d_f}}{t^{1/d_w}} \right)^{\frac{d_w}{d_J -1}} \right) d\xi,
\end{align*}
which, by substitution $\xi^{1/d_f}t^{-1/d_w} = \zeta$, is equal to
$$
\frac{c_4^{\prime} d_f}{L^{M d_f}} \int_{N^{(M-1) /d_f} t^{-1/d_w}}^{\infty} \zeta^{d_f -1} \exp\left( -c_3^{\prime} \zeta^{\frac{d_w}{d_J-1}}\right) d\zeta.
$$
Now, by using an elementary estimate
\begin{equation*}
\int_{a}^{\infty} y^{\beta} e^{-\eta y^{\gamma}} dy \leq c (a \vee 1)^{\beta-\gamma+1} e^{-\eta(a \vee 1)^\gamma}, \quad \eta,\beta,\gamma >0,
\end{equation*}
and the fact that $N^{M/d_f} = L^M$, we can conclude the proof of the upper bound in \eqref{eq:estimate1g}, getting 
\begin{align*}
g^{(1)}_M(t,x,y)
& \leq c_5^{\prime} L^{-d_{f}M} \left( \frac{L^{M-1}}{t^{1/d_w}} \vee 1\right)^{d_f-\frac{d_w}{d_J-1}} \exp\left(-c_3^{\prime} \left( \frac{L^{M-1}}{t^{1/d_w}} \vee 1\right)^{\frac{d_w}{d_J-1}}\right) \\
& \leq c_6^{\prime}L^{-d_{f}M} \left( \frac{L^M}{t^{1/d_w}} \vee 1\right)^{d_f-\frac{d_w}{d_J-1}} \exp\left(-c_7^{\prime} \left( \frac{L^M}{t^{1/d_w}} \vee 1\right)^{\frac{d_w}{d_J-1}}\right).
\end{align*}

\noindent
THE LOWER BOUND. 
First recall that $B(M,0,y)$ can be an empty set. Therefore, we first write
\begin{align} \label{eq:low_aux}
g^{(1)}_M(t,x,y) \geq \sum_{m\geq 1} \sum_{y' \in A(M,m,y)} g\left(t,x,y'\right). 
\end{align}
When $m \geq 1$ and $y' \in  A(M,m,y)$, then $d_{M+m+1}\left(x,y'\right) =1$. By applying the lower estimate in \eqref{eq:main_ass} with $n=M+m+1$, we get 
$$\left|x-y'\right|\leq \frac{1}{c_{17}} L^{M+m+1}$$
(recall also that the cardinality of $A(M,m,y)$ is equal to $N^{m} (N-1)$). Therefore, by the lower subgaussian bound of $g(t,x,y')$, the series on the right hand side of \eqref{eq:low_aux} is larger than or equal to
\begin{align*}
c_8^{\prime}& t^{-\frac{d_s}{2}} \sum_{m \geq 1} N^{m}(N-1) \exp\left( c_9^{\prime} \left( \frac{\left(\frac{1}{c_{17}}\right)^{d_w} L^{(M+m+1)d_w}}{t} \right)^{\frac{1}{d_J-1}} \right) \\
& = c_8^{\prime} N^{-M}t^{-\frac{d_s}{2}} \sum_{m \geq 1} N^{M+m} (N-1)  \exp\left( c_{10}^{\prime} \left( \frac{N^{(M+m)\frac{d_w}{d_f}} }{t} \right)^{\frac{1}{d_J-1}} \right)
\end{align*}
where $c_8^{\prime}$ and $c_{10}^{\prime}$ are absolute constants. Now, by estimating the series by an appropriate integral (similarly as in the proof of the upper bound), we show that the above member is larger than or equal to
\begin{align*}
c_{11}^{\prime}L^{-M d_f}& t^{-d_s / 2} \int_{N^{M+1}}^{\infty} \exp \left( -c_{10}^{\prime}  \left( \frac{\xi^{d_w \ d_f}}{t} \right)^{\frac{1}{d_J -1}} \right) d\xi \\
& = c_{11}^{\prime}L^{-M d_f}t^{-d_s / 2} \int_{N^{M+1}}^{\infty} \exp \left(-c_{10}^{\prime} \left( \frac{\xi^{1/d_f}}{t^{1/d_w}} \right)^{\frac{d_w}{d_J -1}} \right) d\xi \\
& = c_{11}^{\prime} d_f L^{-M d_f} \int_{L^{(M+1)} t^{-1/d_w}}^{\infty} \zeta^{d_f-1} \exp\left( -c_{10}^{\prime} \zeta^{\frac{d_w}{d_J -1}}\right) d\zeta.
\end{align*}
Using an elementary estimate
\begin{equation*}
\int_{a}^{\infty} y^{\beta} e^{-\eta y^{\gamma}} dy \geq c (a \vee 1)^{\beta-\gamma+1} e^{-\eta(a \vee 1)^\gamma}, \quad \eta,\beta,\gamma >0,
\end{equation*}
we can now conclude the proof writing
\begin{align*}
& g^{(1)}_M(t,x,y) \\ 
& \ \ \ \ \ \ \geq c_{11}^{\prime} L^{-d_{f}M} \left(\frac{L^{(M+1)}}{t^{1/d_w}} \vee 1\right)^{d_f-\frac{d_w}{d_J-1}} \exp\left(-c_{10}^{\prime} \left(  \frac{L^{(M+1)}}{t^{1/d_w}} \vee 1\right)^{\frac{d_w}{d_J-1}}\right)\\
& \ \ \ \ \ \ \geq c_{12}^{\prime}L^{-d_f M} \left( \frac{L^{M}}{t^{1/d_w}} \vee 1\right)^{d_f-\frac{d_w}{d_J-1}} \exp\left(-c_{13}^{\prime} \left(  \frac{L^{M}}{t^{1/d_w}} \vee 1\right)^{\frac{d_w}{d_J-1}}\right).
\end{align*}
This also completes the proof of the lemma.
\end{proof}

We are now ready to collect all the above auxiliary estimates and to give the proofs of our main theorems.  

\begin{proof}(of Theorem \ref{th:main})
Let $M \in \mathbb{Z}$, $t>0$ and assume first that $x, y \in \mathcal{K}^{\left\langle M \right\rangle} \backslash V_{M}^{\left\langle M\right\rangle}$. Recall that from \eqref{eq:decomposition} we have 
\begin{equation*}
g_M (t,x,y) = g^{(1)}_M(t,x,y) + g^{(2)}_M(t,x,y) + g(t,x,y).  
\end{equation*}
By the subgaussian upper estimate in \eqref{eq:kum} and Lemma \ref{lem:adjacent}, we have
\begin{align}
0 \leq g^{(2)}_M(t,x,y) & \leq c_1^{\prime} \sum_{y' \in C(M,0,y)} t^{-d_s/2} \exp \left(-c_2^{\prime} \left(\frac{|x-y'|^{d_w}}{t} \right)^{\frac{1}{d_J -1}} \right) \nonumber \\ 
& \leq c_1^{\prime} k t^{-d_s/2} \exp \left(-c_2^{\prime} \left(\frac{(c_{20} |x-y|)^{d_w}}{t} \right)^{\frac{1}{d_J -1}} \right) \nonumber \\
& = c_3^{\prime} t^{-d_s/2} \exp \left(-c_4^{\prime} \left(\frac{\left|x-y \right|^{d_w}}{t} \right)^{\frac{1}{d_J -1}} \right).
\label{eq:neigh}
\end{align}
If $x, y \in \mathcal{K}^{\left\langle M \right\rangle} \backslash V_{M}^{\left\langle M\right\rangle}$, then the claimed two-sided bounds in 
Theorem \ref{th:main} follows from a combination of the estimates of $g^{(1)}_M$ in Lemma \ref{lem:tail}, the above estimates of $g^{(2)}_M$ and the subgaussian two-sided estimates of $g$ in \eqref{eq:kum}. Thanks to the continuity of the function $g_M(t,x,y)$ (see \cite[Th. 4.1 (1)]{bib:KOPP}) these bounds also extend to arbitrary $x, y \in \mathcal{K}^{\left\langle M \right\rangle}$. This completes the proof of the theorem.
\end{proof}

\begin{proof}(of Theorem \ref{th:main2})
Let $M \in \mathbb{Z}$, $t>0$ and suppose that $x, y \in \mathcal{K}^{\left\langle M \right\rangle} \backslash V_{M}^{\left\langle M\right\rangle}$. Similarly as above, we have 
\begin{align} \label{eq:proof2}
g_M(t,x,y)-g(t,x,y) = g^{(1)}_M(t,x,y)+g^{(2)}_M(t,x,y),
\end{align}
by \eqref{eq:decomposition}, and 
from Lemma \ref{lem:tail} we obtain that
\begin{equation*}
c_{21} h_{c_{22}}(t,M) \leq g^{(1)}_M(t,x,y) \leq c_{23} h_{c_{24}}(t,M).
\end{equation*}
It is then enough to estimate $g^{(2)}_M(t,x,y)$. From \eqref{eq:kum} and Lemma \ref{lem:adjacent}, we see that for $y' \in C(M,m,y)$
$$
g(t,x,y') \leq c_1^{\prime} f_{c_2^{\prime}} \left( |x-z|+ |z-y|\right),
$$
with $z \in V_{M}^{\left\langle M\right\rangle}$ such that $\Delta_M(y') \cap \mathcal{K}^{\left\langle M \right\rangle} = \{z\}$, and the absolute constants $c_1^{\prime}, c_2^{\prime}$. On the other hand, $|x-y'| \leq |x-z|+|z-y'| = |x-z|+|z-y|$, which gives
$$
g(t,x,y') \geq c_3^{\prime} f_{c_4^{\prime}} \left( |x-z| + |z-y|\right).
$$
Then, summing over $y' \in C(M,m,y)$, we obtain
\begin{align*}
g^{(2)}_M(t,x,y) = \sum_{y' \in C(M,m,y)} g(t,x,y') & \leq c_1^{\prime} \sum_{z \in V_M^{*}} f_{c_2^{\prime}} (t, |x-z|+|z-y|) \\
& \leq c_1^{\prime} k f_{c_2^{\prime}} (t, \delta_M(x,y))
\end{align*}
and
\begin{align*}
g^{(2)}_M(t,x,y) = \sum_{y' \in C(M,m,y)} g(t,x,y') & \geq c_3^{\prime} \sum_{z \in  V_M^{*}} f_{c_4^{\prime}}(t, |x-z|+|z-y|)  \\
& \geq c_3^{\prime} f_{c_4^{\prime}} (t, \delta_M(x,y)),
\end{align*}
where $\delta_M(x,y) = \inf_{z \in V^{*}_{M}} (|x-z|+|z-y|)$. As the functions on both sides of \eqref{eq:proof2} are continuous in $(x,y)$ on $\mathcal{K}^{\left\langle M \right\rangle} \times \mathcal{K}^{\left\langle M \right\rangle}$, the above bounds in fact extends to all $x, y \in \mathcal{K}^{\left\langle M \right\rangle}$. This completes the proof.
\end{proof}

Below we will write $a(t,x,y,M) \approx b(t,x,y,M)$ if there exist positive constants $c_1^{\prime}$, $c_2^{\prime}$ independent of $x,y,t,M$ such that 
\begin{equation*}
c_1^{\prime}a(t,x,y,M) \leq b(t,x,y,M) \leq c_2^{\prime} a(t,x,y,M).
\end{equation*}
We will now describe the behaviour of $g_M(t,x,y)$ in various time-space regimes. Recall that we have $|x-y| \leq L^M \diam(\mathcal{K}^{\left\langle 0 \right\rangle})$ for every $x, y \in \mathcal{K}^{\left\langle M \right\rangle}$ with $M \in \mathbb{Z}$. 

\begin{corollary} \label{cor:uniform}
For every $M \in \mathbb{Z}$ and $x, y \in \mathcal{K}^{\left\langle M \right\rangle}$ we have the following. 
If $t > L^{Md_w}$, then 
$$
g_M(t,x,y) \approx L^{-Md_f},  
$$
and if $0<t \leq L^{Md_w}$, then 
\begin{align} \label{eq:sec_case}
c_{25} f_{c_{26}}(t,|x-y|) \leq g_M(t,x,y) \leq c_{27} f_{c_{28}}(t,|x-y|),
\end{align}
with certain numerical constants $c_{25},...,c_{28}>0$ independent of $t, x, y$ and $M$. 
In particular, for $0<t \leq L^{Md_w}$ such that $t > |x-y|^{d_w}$ we have $g_M(t,x,y) \approx t^{-d_s / 2}$.
\end{corollary}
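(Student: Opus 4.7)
The plan is to feed the two-sided envelope from Theorem \ref{th:main},
\begin{equation*}
g_M(t,x,y)\approx f_{c_2}(t,|x-y|)\vee h_{c_3}(t,M),
\end{equation*}
into a case analysis driven by the single quantity $\tau:=L^{M}/t^{1/d_w}$. The threshold $\tau=1$ is exactly the boundary $t=L^{Md_w}$ in the statement, and the work consists of showing that in each regime one term in the envelope dominates the other (up to constants). Two facts will be used repeatedly: the identity $d_s/2=d_f/d_w$ recalled in Section 2 and the uniform bound $|x-y|\leq L^{M}\diam(\mathcal{K}^{\left\langle 0\right\rangle})$ valid on any $M$-complex, in which the diameter is a fixed constant independent of $t,x,y,M$.

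For $t>L^{Md_w}$, i.e.\ $\tau<1$, the factor $\tau\vee 1$ in \eqref{eq:def_h} collapses to $1$, so $h_{c}(t,M)=L^{-d_{f}M}e^{-c}$ is comparable to $L^{-d_{f}M}$. The uniform bound on $|x-y|$ combined with $t>L^{Md_w}$ yields $(|x-y|^{d_w}/t)^{1/(d_J-1)}\leq(\diam(\mathcal{K}^{\left\langle 0\right\rangle}))^{d_w/(d_J-1)}$, a constant, so the exponent in $f_c(t,|x-y|)$ is bounded and $f_c(t,|x-y|)\approx t^{-d_s/2}=t^{-d_f/d_w}\leq L^{-Md_f}$. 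Hence the $h$ term dominates and $g_M(t,x,y)\approx L^{-Md_f}$.

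For $0<t\leq L^{Md_w}$, i.e.\ $\tau\geq 1$, set $u=(L^{Md_w}/t)^{1/(d_J-1)}\geq 1$ and $v=(|x-y|^{d_w}/t)^{1/(d_J-1)}$. A direct substitution in \eqref{eq:def_h} using $d_s/2=d_f/d_w$ gives
\begin{equation*}
h_{c}(t,M)=t^{-d_s/2}\,u^{-1}\,e^{-c u}.
\end{equation*}
The uniform bound on $|x-y|$ forces $v\leq C_0 u$ with $C_0=(\diam(\mathcal{K}^{\left\langle 0\right\rangle}))^{d_w/(d_J-1)}$, so for any $c>0$ we have $(c/C_0)v\leq cu$ and therefore $e^{-(c/C_0)v}\geq e^{-cu}\geq u^{-1}e^{-cu}$ (since $u^{-1}\leq 1$), i.e.\ $h_c(t,M)\leq f_{c/C_0}(t,|x-y|)$. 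Thus in the upper estimate of Theorem \ref{th:main} the $h$ term is absorbed into the $f$ term after replacing its exponential constant by $\min(c_5,c_6/C_0)$; the matching lower bound $f_{c_2}(t,|x-y|)\leq f_{c_2}(t,|x-y|)\vee h_{c_3}(t,M)$ is trivial. This proves \eqref{eq:sec_case}. Finally, in the sub-case $t>|x-y|^{d_w}$ one has $v<1$, so $e^{-cv}$ is bounded above and below by positive constants and $f_c(t,|x-y|)\approx t^{-d_s/2}$, giving $g_M(t,x,y)\approx t^{-d_s/2}$ via \eqref{eq:sec_case}.

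The only delicate point is the constant bookkeeping in the absorption step: the key feature is that $C_0$ depends only on $\diam(\mathcal{K}^{\left\langle 0\right\rangle})$ and on the fractal exponents, so the inequality $h_c\leq f_{c/C_0}$ is uniform in $t,x,y,M$. Everything else is a routine substitution into the explicit formulas for $f_c$ and $h_c$.
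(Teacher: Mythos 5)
Your proof is correct and follows essentially the same route as the paper: both feed the two-sided envelope of Theorem \ref{th:main} into a case analysis at the threshold $t=L^{Md_w}$, rewrite $h_c(t,M)$ as $t^{-d_s/2}u^{-1}e^{-cu}$, and use $|x-y|\leq \diam(\mathcal{K}^{\left\langle 0\right\rangle})L^{M}$ to absorb the $h$ term into the $f$ term. The only cosmetic difference is that the paper further splits the regime $0<t\leq L^{Md_w}$ into the sub-cases $t>|x-y|^{d_w}$ and $t\leq|x-y|^{d_w}$, whereas your single inequality $h_c\leq f_{c/C_0}$ handles both at once.
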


\begin{proof}
The first assertion follows directly from the fact that for $t > L^{Md_w}$ we have 
$$
t^{- d_s /2} < L^{-Md_{f}} \quad \text{and} \quad c_1^{\prime} \frac{|x-y|}{t^{1/d_w}} \leq \frac{L^M}{t^{1/d_w}} < 1.
$$
Indeed, the last two inequalities give 
$$
f_{c_2^{\prime}}(t,|x-y|) \approx t^{- d_s /2} \quad \text{and} \quad h_{c_3^{\prime}}(t,M) \approx L^{-Md_{f}}
$$
and from the estimates in Theorem \ref{th:main} we get $g_M(t,x,y) \approx L^{-Md_f}$. 

Consider now the case $|x-y|^{d_w} < t \leq L^{Md_w}$. Then again $|x-y|/ t^{1/d_w} \leq 1$, which yields
$$
f_{c_4^{\prime}}(t,|x-y|) \approx t^{- d_s /2} .
$$
Moreover,
\begin{align*}
0 \leq h_{c_5^{\prime}} (t,M) =  t^{-d_s /2} a^{-1} \exp \left(-c_{5}^{\prime} a\right),
\end{align*}
where $a=\left(\frac{L^M}{t^{1/d_w}} \right)^{d_w / (d_J -1)} \geq 1$. 
As the function $a \mapsto a^{-1} \exp \left(-c_5^{\prime} a\right)$ is bounded for $a \geq 1$, we conclude that 
$$
f_{c_{6}^{\prime}}(t,|x-y|) \vee h_{c_5^{\prime}}(t,M) \approx t^{-d_s /2}.
$$
This also implies \eqref{eq:sec_case}. 

Finally, when $0< t \leq L^{Md_w}$ and $t\leq |x-y|^{d_w}$, then by $|x-y|\leq c_7^{\prime} L^M$ and $\frac{L^M}{t^{1/d_w}}>1$, we see that
\begin{align*}
 0  \leq h_{c_8^{\prime}} (t,M) & = t^{-d_s /2} \left(\frac{L^M}{t^{1/d_w}} \right)^{-\frac{d_w}{d_J -1}} \exp \left(-c_8^{\prime} \left(\frac{L^M}{t^{1/d_w}} \right)^{\frac{d_w}{d_J -1}}\right)\\
& \leq t^{-d_s /2} \exp \left(-c_9^{\prime} \left(\frac{|x-y|}{t^{1/d_w}} \right)^{\frac{d_w}{d_J -1}}\right) = f_{c_9^{\prime}} (t, |x-y|).
\end{align*}
This again implies \eqref{eq:sec_case} and completes the proof. 
\end{proof}
Note that similar result can be given for the difference $g_M(t,x,y) - g(t,x,y)$.

\section*{Acknowledgements}
The author would like to thank Kamil Kaleta for drawing the attention to the subject and many valuable remarks.

\end{document}